\newtheorem{theorem}{Theorem}[section]
\newtheorem{lemma}[theorem]{Lemma}
\newtheorem{corollary}[theorem]{Corollary}
\theoremstyle{remark}
\newtheorem{remark}[theorem]{Remark}
\numberwithin{equation}{section}
\newcommand{\C}{{\mathbb{C}}}
\newcommand{\sd}{\sigma_{d-1}}
\newcommand{\Ec}{{\mathcal{E}}}
\newcommand{\Dc}{{\mathcal{D}}}
\newcommand{\Sc}{{\mathcal{S}}}
\newcommand{\Mcc}{{\mathcal{M}}}
\newcommand{\R}{{\mathbb{R}}}
\newcommand{\Hb}{{\mathbb{H}}}
\newcommand{\Sb}{\mathbb{S}}
\newcommand{\Sbd}{\mathbb{S}^{d-1}}
\newcommand{\E}{{\mathbb{E}}}
\newcommand{\Pb}{{\mathbb{P}}}
\newcommand{\Mb}{{\mathbb{M}}}
\newcommand{\Sbb}{{\mathbb{S}}}
\newcommand{\trace}{{\rm{Trace}}}
\newcommand{\sym}{{\rm{sym}}}
\newcommand{\proj}{{\rm{proj}}}
\begin{document}

\title{On the Search for Tight Frames of Low Coherence}

\author{Xuemei Chen$^\dagger$}
\address{Department of Mathematics and Statistics,
University of North Carolina Wilmington}
\email{chenxuemei@uncw.edu}
\thanks{\noindent $^\dagger$ The research of this author was supported
by the U. S. National Science Foundation under grant DMS-1908880.}
\author{Douglas P. Hardin$^*$}
\address{Center for Constructive Approximation, Department of Mathematics,
Vanderbilt University, Nashville, Tennessee 37240}
\email{doug.hardin@Vanderbilt.Edu}

\author{Edward B. Saff$^*$}
\address{Center for Constructive Approximation, Department of Mathematics,
Vanderbilt University, Nashville, Tennessee 37240}
\email{edward.b.saff@vanderbilt.edu}
\thanks{\noindent \ \ \ $^*$ The research of these authors was supported, in part,
by the U. S. National Science Foundation under grant DMS-1516400.
}
 
 \subjclass[2010]{Primary 42C15, 31C20 Secondary 42C40, 74G65}
 \keywords{frame, energy, tight, coherence, separation}

\date{\today}


\begin{abstract}
We introduce a projective Riesz $s$-kernel for the unit sphere
$\mathbb{S}^{d-1}$ and investigate properties
of  $N$-point energy minimizing configurations for such a kernel.  We show
that these configurations, for $s$ and $N$ sufficiently large, form frames that are well-separated (have low coherence) and are nearly tight. Our results
suggest an algorithm for computing well-separated tight frames which
is illustrated with numerical examples.

\end{abstract}

\maketitle

%
%

\section{Introduction}
%
%
%
%
%
%
%
%

A set of vectors $X=\{x_i\}_{i\in I}$ is a \emph{frame}\footnote{Depending on the context, we either consider $X$ to be a multiset, allowing for repetition, or as an ordered list.} for a separable Hilbert space $H$ if there exist $A,B>0$ such that for every $x\in H$,
$$A\|x\|^2\leq\sum_{i\in I}|\langle x,x_i\rangle|^2\leq B\|x\|^2.$$
The constant $A$ ($B$, resp.) is called the \emph{lower (upper, resp.) frame bound}.
When $A=B$, $X$ is called a \emph{tight frame}, which generalizes the concept of an orthonormal basis in the sense that the recovery formula $x=\frac{1}{A}\sum_{i\in I}\langle x,x_i\rangle x_i$ holds for every $x\in H$. 
For the finite dimensional space $H=\Hb^d$, where $\Hb=\R$ or $\C$, $X=\{x_i\}_{i=1}^N$ is a frame of $\Hb^d$ if and only if $\{x_i\}_{i=1}^N$  spans $\Hb^d$. We shall also use $X$ to denote the matrix whose $i$th column is $x_i$ and therefore we have
$$X \text{ is tight with frame bound } A \Longleftrightarrow XX^*=AI_d,$$
where $I_d$ is the $d\times d$ identity matrix.


Let $\Sc(d,N):=\{X=\{x_i\}_{i=1}^N\subset\Hb^d:  \|x_i\|=1\}$ be the collection of all $N$-point configurations on $\Sb^{d-1}$, the unit sphere of $\Hb^d$, where $\|\cdot\|$ denotes the $\ell_2$ norm. If we have a unit norm tight frame $X\in\Sc(d,N)$, then it is well known that the frame bound has to be $N/d$ since
\begin{equation}\label{equ:tight}
XX^*=\frac{N}{d}I_d.
\end{equation}

Benedetto and Fickus   show in~\cite{BF03} that frames that attain
\begin{equation}\label{equ:fp}
\min_{X\in\Sc(d,N)} \sum_{i\neq j}|\langle x_i,x_j\rangle|^2
\end{equation}
 are precisely the unit norm tight frames.
We will call the function $|\langle x, y\rangle|^2$ the \emph{frame potential kernel}. Ehler and Okoudjou \cite{EO12} generalized this result to the  $p$-frame potential kernel $|\langle x,y\rangle|^p$, see also \cite{BGMPV19} for recent results on $p$-frames.

Separation is a desirable property of a frame. It is quantified by the \emph{coherence} $\xi(X)$ of a frame  defined for $X\in\Sc(d,N)$ by
$$\xi(X):= \max_{i\neq j}|\langle x_i,x_j\rangle|.$$
The smaller the coherence, the better separated the frame is. 

A straightforward method to find well-separated frames is to solve
\begin{equation}\label{equ:sep}
\xi_{N}:=\min_{X\in\Sc(d,N)}\xi(X)=\min_{X\in\Sc(d,N)} \max_{i\neq j}|\langle x_i,x_j\rangle|,
\end{equation}
which has been studied in several works including Welch \cite{W74}, Conway et al. \cite{CHS96}, Strohmer and Heath~\cite{SH03}, and more recently \cite{FJM17, BH14}. 
The problem \eqref{equ:sep} is often referred as the \emph{best line-packing} problem because it asks how to arrange $N$ lines in $\Hb^d$ so that they are as far apart as possible. Conway et al~\cite{CHS96} made extensive computations on this problem from a more general perspective: how to best pack $n$-dimensional subspaces in $\R^m$? There are also many other contributions using tools in geometry and combinatorics \cite{DGS91},  and statistics \cite{BEG17}.
A minimizer of \eqref{equ:sep} is called a \emph{Grassmannian frame} by~\cite{SH03} and we shall use this terminology as well.
Our goal in this paper is to develop methods for generating tight frames with low coherence using   energy minimization   on the projective space  $\Hb \Pb^{d-1}$, which consists of all lines  in $\Hb^d$ through the origin;   
namely,    sets of the form 
\begin{equation}\label{elldef}\ell(x):=\{\alpha x:\alpha\in \Hb\}, \end{equation}
for some $x\in \Sbd$. We  endow   $\Hb \Pb^{d-1}$ with the metric   
\begin{equation}
\rho(\ell(x),\ell(y)):= \sqrt{ 2-2|\langle x,y\rangle|^2}, \qquad x,y\in \Sb^{d-1},
\end{equation}
which is   the `chordal' distance\footnote{The chordal distance between the lines $\ell(x)$ and $\ell(y)$ is given by $\min\{\|x-uy\|\colon u\in \mathbb{H}, |u|=1\}$.}
and   utilize  kernels  on $\Sbd\times\Sbd$ of the form:
\begin{equation}\label{equ:Kf}
K(x,y)= f\left(\rho(\ell(x),\ell(y))\right)=f\left(\sqrt{ 2-2|\langle x,y\rangle|^2}\right), \quad x,y\in \Sbd\subset\Hb^d.
\end{equation} 
 The energy of  $X=X_N=\{x_1,\ldots, x_N\}$   with respect to the kernel $K$  is given by
\begin{equation}\label{equ:EK}
E_K(X_N):=\sum_{i\neq j}K(x_i,x_j).
\end{equation}
One seeks   the infimum of \eqref{equ:EK} over all possible $N$ point configurations on $\Sbd$.  Assuming  $K$ is lower semi-continuous  on $\Sbd\times\Sbd$   so the infimum is attained,   we define the \emph{$N$-point minimal energy of kernel $K$}   as
\begin{equation}\label{equ:disg}
\mathcal{E}_K(\Sbd,N):=\min_{X\in \Sc(d,N)}E_K(X).
\end{equation}
  An  $N$-point configuration that achieves the minimum \eqref{equ:disg} will be denoted by $X_N^*(K, \Sbd)$ (or $X_N^*$ when there is no ambiguity).  
  So far the minimal energy and optimal configuration have been confined to the sphere and generalizes to any compact set $A$, and will be denoted as $\mathcal{E}_K(A,N), X_N^*(K, A)$ respectively.
  Note that the frame potential $|\langle x,y \rangle|^2$ is of the form \eqref{equ:Kf} and that the   energy minimizers are precisely the unit norm tight frames.   However, in general for $N>d$, these minimizers may not consist of well-separated lines.  

To achieve well separation of lines our approach is to first   consider a class of kernels that are   strongly repulsive and analyze the approximate tightness of their energy minimizers relative to their frame potential energy.       
Specifically, we introduce the \emph{Riesz projective $s$-kernel}
\renewcommand{\arraystretch}{2.2}
\begin{equation}\label{equ:proj}
G_s(x,y):=\left\{\begin{array}{ll}\log\dfrac{1}{1-|\langle x,y\rangle|^2}, & s=0\\\dfrac{1}{(1-|\langle x,y\rangle|^2)^{s/2}}, & s> 0 \end{array}\right.
\end{equation} for $x,y\in \Sbd$ and seek solutions to the problem
\begin{equation}\label{equ:projective}
\min_{X\in\Sc(d,N)} \sum_{i\neq j}G_s(x_i,x_j).
\end{equation}  
The kernel   $G_s$ is a modification of the classical \emph{Riesz $s$-kernel}
defined for $x,y$ in a normed linear space $(V,\|\cdot\|)$ as
\begin{equation}\label{equ:R}
R_s(x,y)=\left\{\begin{array}{ll}\log\frac{1}{\|x-y\|}, & s=0\\\frac{1}{\|x-y\|^s}, & s> 0. \end{array}\right. 
\end{equation}
In fact, as we will show in \eqref{equ:red}, the projective Riesz $s$-kernel $G_s$ can also be represented in terms of $R_s$ for an appropriate subspace $V$ of matrices with the Frobenius norm.  


Notice that minimizers of \eqref{equ:projective} will avoid antipodal points since   the energy in that case would be infinite. 
The connection between projective Riesz $s$-kernels and Riesz $s$-kernels is more immediate in the real case $\R^d$; since $\|x\pm y\|^2=2\pm2\langle x,y\rangle$, we have
$$\frac{1}{(1-|\langle x,y\rangle|^2)^{s/2}}=\frac{2^s}{\|x-y\|^s\|x+y\|^s}.$$ Thus the projective Riesz kernel is just the Riesz kernel with the multiplicative factor $\|x+y\|^s$ to account for antipodal points.

A major focus of this paper is to exploit connections between $G_s$ and $R_s$ and 
reduce solving the projective Riesz $s$-kernel minimization problem \eqref{equ:projective} to solving \begin{equation}\label{equ:riesz}
\min_{X\subset \Dc,|X|=N} \sum_{i\neq j}R_s(x_i,x_j),
\end{equation}
where we take $\Dc$ to be the projective space, but embedded in a higher dimensional real vector space (see Section \ref{sec:embed}).
There are well established theorems available in the minimal energy literature for Riesz $s$-kernels (see e.g. \cite{BHS20}) and we shall review some of them in Section \ref{sec:pre}. 

The projective Riesz $s$-kernel  for   $s<0$ defined by $G_s(x,y):=-(1-|\langle x,y\rangle|^2)^{-s/2}$ is also interesting. For such $s$ we will be solving
\begin{equation}\label{equ:projective:negative}
\min_{X\in\Sc(d,N)} \sum_{i\neq j}-(1-|\langle x ,y\rangle|^2)^{-s/2}\qquad\qquad(s<0).
\end{equation} 
This coincides with \eqref{equ:fp} when $s=-2$. This paper shall focus on the $s\geq0$ case in the analysis, but our numerical experiments will include optimal configurations of \eqref{equ:projective:negative}.


The contributions of this paper are two-fold: 

\begin{itemize}
\item  Derive minimal energy results for the projective Riesz kernel. See Theorems~\ref{thm:contP},  \ref{thm:circleP}, and \ref{thm:eat}.



\item  Develop methods for constructing nearly tight and well-separated frames.  See Theorems~\ref{thm:sepp} and \ref{thm:tight} and numerical results in Section \ref{sec:num:both}.
\end{itemize}



%

\section{Minimal Energy Background}\label{sec:pre}

In this section we will introduce some necessary background on minimizing discrete energy and its relation to the continuous energy. 

The discrete minimal energy problem is known to be challenging, and we  have very limited knowledge about the optimal configuration even for the classical Riesz kernel case \eqref{equ:riesz} on the 2-dimensional sphere. The following theorem settles the case when points are on a circle of a real vector space for a large class of kernels that includes  Riesz  kernels.

\begin{theorem}[Fejes-T\'oth, \cite{F56}]\label{thm:circleR}
If $r>0$  and  $f:(0,2r]\rightarrow\R$ is a non-increasing convex function defined at 0 by the (possibly infinite) value $\lim_{t\rightarrow0^+}f(t)$, then any $N$ equally spaced points on a circle of radius $r$ (in $\R^m$) minimizes the discrete energy $E_K(X_N)$ for the kernel $K(x,y)=f(\|x-y\|)$. If in addition, $f$ is strictly convex, then no other $N$-point configuration on this circle is optimal.
\end{theorem}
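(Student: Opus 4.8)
The plan is to reduce the two-dimensional geometric problem to a one-variable convexity (Jensen) argument after re-expressing the energy in terms of angular gaps. I would record each configuration on the circle by its cyclically ordered angles $\theta_1\le\cdots\le\theta_N$ in $[0,2\pi)$, with consecutive gaps $t_k\ge 0$ summing to $2\pi$. Since the chord joining two points at angular separation $\phi$ has length $2r\sin(\phi/2)$, and since $\sin(\phi/2)=\sin((2\pi-\phi)/2)$, the ordered-pair energy can be written as $E_K(X_N)=\sum_{m=1}^{N-1}\sum_{i=1}^N h\bigl(s_i^{(m)}\bigr)$, where $h(\phi):=f(2r\sin(\phi/2))$ and $s_i^{(m)}:=t_i+t_{i+1}+\cdots+t_{i+m-1}$ (indices mod $N$) is the ``$m$-step'' arc. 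The key bookkeeping observation is that for each fixed $m$ the arcs $s_1^{(m)},\dots,s_N^{(m)}$ lie in $(0,2\pi)$ and satisfy $\sum_{i=1}^N s_i^{(m)}=2\pi m$, because every gap $t_k$ appears in exactly $m$ of them.

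Next I would show that $h$ is convex on $(0,2\pi)$. The essential point is that $\phi\mapsto\sin(\phi/2)$ is concave on the \emph{whole} interval $(0,2\pi)$ (its second derivative equals $-\tfrac14\sin(\phi/2)<0$ there), so $u(\phi):=2r\sin(\phi/2)$ is a positive concave function with values in $(0,2r]$. Composing a convex non-increasing $f$ with a concave $u$ yields a convex function: for $\lambda\in[0,1]$, concavity gives $u(\lambda\phi+(1-\lambda)\psi)\ge\lambda u(\phi)+(1-\lambda)u(\psi)$, and then applying $f$ (first using that $f$ is non-increasing, then that $f$ is convex) gives $h(\lambda\phi+(1-\lambda)\psi)\le\lambda h(\phi)+(1-\lambda)h(\psi)$; no differentiability of $f$ is needed. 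With $h$ convex, Jensen's inequality applied shell-by-shell gives $\sum_{i=1}^N h(s_i^{(m)})\ge N\,h\bigl(\tfrac1N\sum_i s_i^{(m)}\bigr)=N\,h(2\pi m/N)$, and summing over $m$ yields $E_K(X_N)\ge\sum_{m=1}^{N-1}N\,h(2\pi m/N)$. The right-hand side is exactly the energy of the equally spaced configuration, for which $s_i^{(m)}=2\pi m/N$ for all $i,m$, so optimality follows.

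Finally, for the strict-convexity claim I would upgrade each Jensen inequality to its equality condition. The step requiring care is that strict convexity of $f$ alone does not obviously force $h$ to be strictly convex at a symmetric pair $\phi,\ 2\pi-\phi$, where $u(\phi)=u(2\pi-\phi)$; this is resolved by noting that a strictly convex non-increasing function cannot be constant on any subinterval and is therefore strictly decreasing on $(0,2r]$, which makes $h$ strictly convex on all of $(0,2\pi)$. Then equality in Jensen forces $s_i^{(m)}$ to be independent of $i$ for every $m$, and already the case $m=1$ gives $t_1=\cdots=t_N=2\pi/N$, i.e.\ the points are equally spaced. The main obstacle in the whole argument is this initial reduction: recognizing the correct grouping of the $\binom N2$ chords into $N-1$ shells of fixed total angular measure, together with the observation that $f\circ(2r\sin(\cdot/2))$ is convex on the entire interval $(0,2\pi)$ rather than merely on $(0,\pi]$. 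Once these are in place, the remainder is a clean, termwise application of Jensen's inequality.
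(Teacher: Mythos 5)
Your proof is correct, but there is nothing in the paper to compare it against: the paper states this result as cited background (Fejes-T\'oth \cite{F56}; see also \cite{BHS20}) and gives no proof of it. Your argument---grouping the ordered pairs into $N-1$ shells of $m$-step arcs whose angular measures sum to $2\pi m$, observing that $h(\phi)=f(2r\sin(\phi/2))$ is convex on $(0,2\pi)$ as the composition of a non-increasing convex $f$ with a concave positive function, and applying Jensen shell by shell---is precisely the classical Fejes-T\'oth argument. The uniqueness step is also handled correctly: you rightly note that strict convexity of $f$ together with monotonicity forces $f$ to be strictly decreasing, so that strict concavity of $u(\phi)=2r\sin(\phi/2)$ yields strict convexity of $h$ even across the symmetry $\phi\mapsto 2\pi-\phi$, and equality in the $m=1$ shell then pins down equal gaps. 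One small point to tidy: since configurations may have coinciding points, gaps can vanish and the arcs $s_i^{(m)}$ lie in $[0,2\pi]$ rather than $(0,2\pi)$; either dispose of such configurations by noting they have infinite energy when $f(0^+)=\infty$, or extend $h$ continuously to $[0,2\pi]$ (the convexity and Jensen steps survive this extension), after which your argument covers all cases.
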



Less is known regarding optimal configurations for \eqref{equ:riesz} beyond $\Sb^1$. For optimality of configurations with particular cardinality $N$ and dimension $d>1$, see  \cite{CK07} and \cite{BHS20}. 
On the other hand, many asymptotic results (as $N\to \infty$) for optimal configurations on the sphere as well as on $\R^d$ are known
(for examples of recent results, see \cite{BS18}, \cite{HLSS2018}).  

For a set of $N$ points $X=\{x_i\}_{i=1}^N$, the \emph{separation distance} of $X$ is defined as
$$\delta(X):=\min_{i\neq j} \|x_i-x_j\|.$$
The \emph{best-packing problem} is to find the $N$-point configuration on $A$ that maximizes the separation distance:
\begin{equation}\label{equ:bestpacking}
\delta_N(A)=\max_{X\subset A,\, |X|=N}\delta(X)= \max_{X\subset A, \, |X|=N}\min_{i\neq j}\|x_i-x_j\|.
\end{equation}


It is immediate, for example, that the best $N$-point  packing of  $\Sbb^1\subset\R^2$ consists of  $N$ equally spaced points on the circle.  



When $s\rightarrow\infty$,  the minimization problem with respect to the Riesz kernel $R_s(x,y)$
$$\min_{X\subset A,|X|=N} E_{R_s}(X)$$ 
 turns into the best-packing problem \eqref{equ:bestpacking}; more precisely, 

\begin{theorem}[\cite{BHS20}]\label{thm:sinf}
If $N\geq2$ and $A\subset\R^m$ is  a  compact set of cardinality at least $N$, then
$$\lim_{s\rightarrow\infty}\Ec_{R_s}(A, N)^{1/s}=1/\delta_N(A),$$
where $R_s$ is the Riesz kernel defined in \eqref{equ:R}.
Furthermore, if $X_s$ is an optimal configuration that achieves $\Ec_{R_s}(A,N)$, then every cluster point as $s\rightarrow\infty$ of the set $\{X_s\}_{s>0}$ on $A$ is an $N$-point  best-packing configuration on $A$.
\end{theorem}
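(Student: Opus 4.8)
The plan is to exploit the fact that, as $s\to\infty$, the Riesz energy of a configuration is governed entirely by its closest pair of points. For any $N$-point configuration $X=\{x_i\}_{i=1}^N\subset A$ with distinct points, the sum $E_{R_s}(X)=\sum_{i\neq j}\|x_i-x_j\|^{-s}$ has $N(N-1)$ terms, each bounded above by the largest one, so I would first record the elementary sandwich
\begin{equation}
\max_{i\neq j}\|x_i-x_j\|^{-s}\;\le\; E_{R_s}(X)\;\le\; N(N-1)\,\max_{i\neq j}\|x_i-x_j\|^{-s}.
\end{equation}
Since $\max_{i\neq j}\|x_i-x_j\|^{-s}=\bigl(\min_{i\neq j}\|x_i-x_j\|\bigr)^{-s}=\delta(X)^{-s}$, this reads $\delta(X)^{-s}\le E_{R_s}(X)\le N(N-1)\,\delta(X)^{-s}$. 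Taking $s$-th roots and using $(N(N-1))^{1/s}\to 1$ then yields $E_{R_s}(X)^{1/s}\to 1/\delta(X)$ for each fixed $X$; the same sandwich, applied uniformly, is what drives the whole argument.

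Next I would establish the energy limit by separate upper and lower estimates. For the lower bound, I use only the first inequality above together with the definition of best packing: any $N$-point $X\subset A$ satisfies $\delta(X)\le \delta_N(A)$, hence $E_{R_s}(X)\ge \delta(X)^{-s}\ge \delta_N(A)^{-s}$, and minimizing over $X$ gives $\Ec_{R_s}(A,N)^{1/s}\ge 1/\delta_N(A)$ for every $s>0$, so $\liminf_{s\to\infty}\Ec_{R_s}(A,N)^{1/s}\ge 1/\delta_N(A)$. For the upper bound I would test against a best-packing configuration $Y$ (which exists since $\delta$ is continuous on the compact product $A^N$), for which $\delta(Y)=\delta_N(A)$; the second inequality gives $\Ec_{R_s}(A,N)\le E_{R_s}(Y)\le N(N-1)\,\delta_N(A)^{-s}$, whence $\Ec_{R_s}(A,N)^{1/s}\le (N(N-1))^{1/s}/\delta_N(A)\to 1/\delta_N(A)$, so $\limsup_{s\to\infty}\Ec_{R_s}(A,N)^{1/s}\le 1/\delta_N(A)$. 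Combining the two inequalities proves the stated limit. (Existence of the minimizers $X_s$ follows from lower semicontinuity of $R_s$ and compactness of $A^N$, with minimizers necessarily having distinct points since otherwise the energy is infinite, using that $A$ has at least $N$ points.)

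Finally, for the statement about cluster points, let $s_k\to\infty$ and pass to a subsequence along which $X_{s_k}$ converges in $A^N$ to some $X^\infty$. From the lower half of the sandwich, $\delta(X_{s_k})\ge 1/\,E_{R_{s_k}}(X_{s_k})^{1/s_k}=1/\,\Ec_{R_{s_k}}(A,N)^{1/s_k}$, and the right-hand side tends to $\delta_N(A)$ by the limit just proved; therefore $\liminf_k \delta(X_{s_k})\ge \delta_N(A)$. Since $\delta$ is continuous, $\delta(X^\infty)=\lim_k \delta(X_{s_k})\ge \delta_N(A)$, while $\delta(X^\infty)\le \delta_N(A)$ by the very definition of $\delta_N(A)$; hence $\delta(X^\infty)=\delta_N(A)$ and $X^\infty$ is a best-packing configuration. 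In particular the separation staying bounded below by $\delta_N(A)>0$ guarantees that $X^\infty$ consists of $N$ distinct points. I expect no serious obstacle here: the entire result hinges on the single observation that $E_{R_s}^{1/s}$ collapses onto the reciprocal separation distance, and the only points requiring care are the routine compactness and continuity arguments needed to justify existence of minimizers and best-packers and to pass $\delta$ to the limit.
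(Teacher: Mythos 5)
Your proof is correct, and it is essentially the same argument as the one behind the cited result: the paper itself states this theorem without proof, deferring to \cite{BHS20}, where the proof is precisely your sandwich $\delta(X)^{-s}\le E_{R_s}(X)\le N(N-1)\,\delta(X)^{-s}$ combined with testing against a best-packing configuration and a compactness/continuity argument for the cluster-point claim. All the delicate points (existence of minimizers via lower semicontinuity, distinctness of their points from finiteness of the energy, continuity of $\delta$ on $A^N$, and $\delta(X^\infty)\le\delta_N(A)$ by definition) are handled correctly in your write-up.
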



This discrete minimal energy problem is related to a continuous energy problem as we next describe. Let $\mathcal{M}(A)$ be the set of probability measures supported on $A$. For a general kernel $K$, the \emph{potential function} of a measure $\mu\in\mathcal{M}(A)$ with respect to $K$ is defined as
$$U^{\mu}_K(x):=\int_AK(x,y)\ d\mu(y),$$ 
provided the integral exists as an extended real number.   The energy of $\mu$ is defined  as 
$$I_K(\mu):=\int_AU_K^{\mu}(x)\ d\mu(x)=\iint_{A\times A}K(x,y)\ d\mu(x)d\mu(y),$$ 
and the {\em Wiener constant} is
\begin{equation}\label{equ:WK}
W_K(A):=\inf_{\mu\in\Mcc(A)} I_K(\mu).
\end{equation}

Likewise this infimum can be achieved, and the probability measure that optimizes the above problem is called the \emph{$K$-equilibrium measure}. The $K$-\emph{capacity} of the set $A$ is defined by
$$\text{cap}_K(A):=\frac{1}{W_K(A)}.$$ 
A set $A$ has zero capacity means that $W_K(A)=\infty$, which makes the problem \eqref{equ:WK} trivial since every probabilistic measure generates $\infty$ energy.

We now present a classical theorem connecting the discrete minimal energy problem to the continuous one. 
Before that we introduce the weak* limit of measures.
A sequence of measures $\mu_n$ converges weak* to $\mu$ if for every continuous function $f$ on $A$,
$$\lim_{n\rightarrow\infty}\int fd\mu_n=\int f d\mu.$$
We also define $\delta_x$ to be the point mass probability measure on the point $x$. Moreover, given a finite collection of points $X$, its \emph{normalized counting measure} is defined as
$$\nu(X)=\frac{1}{|X|}\sum_{x\in X}\delta_x.$$

\begin{theorem}[{\cite{C58}, \cite[Theorem 4.2.2]{BHS20}}]\label{thm:aym}
If $K$  is a  lower semicontinuous and symmetric  kernel on $A\times A$, where $A\subset\R^m$ is an infinite compact set, then 
\begin{equation}\label{equ:aym}
\lim_{N\rightarrow\infty}\frac{\Ec_K(A,N)}{N^2}=W_K(A).
\end{equation}
Moreover, every weak* limit measure (as $N\to\infty$) of the sequence of normalized counting measures $\nu(X_N^*)$ is a $K$-equilibrium measure.
\end{theorem}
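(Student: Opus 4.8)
The plan is to establish the two inequalities $\liminf_{N\to\infty} \mathcal{E}_K(A,N)/N^2 \geq W_K(A)$ and $\limsup_{N\to\infty} \mathcal{E}_K(A,N)/N^2 \leq W_K(A)$ separately, and then to extract the equilibrium-measure conclusion from the machinery developed for the lower bound.

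For the lower bound I would first record two consequences of lower semicontinuity on the compact set $A\times A$: the kernel $K$ is bounded below, and it can be written as the pointwise increasing limit of a nondecreasing sequence of continuous functions $g_\ell\nearrow K$. Let $X_N^*=\{x_1,\dots,x_N\}$ be optimal, set $\nu_N=\nu(X_N^*)$, and by weak* compactness of $\mathcal{M}(A)$ pass to a subsequence along which $\nu_{N_k}\to\mu^*$ weak*. Then for each fixed $\ell$,
\[
\mathcal{E}_K(A,N)=\sum_{i\neq j}K(x_i,x_j)\geq \sum_{i\neq j}g_\ell(x_i,x_j)=N^2 I_{g_\ell}(\nu_N)-\sum_i g_\ell(x_i,x_i),
\]
and since $g_\ell$ is bounded on the diagonal the last term is $O(N)$. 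Dividing by $N^2$ and using the weak* continuity of $I_{g_\ell}$ (valid because $g_\ell\in C(A\times A)$ and $\nu_{N_k}\times\nu_{N_k}\to\mu^*\times\mu^*$) gives $\lim_k \mathcal{E}_K(A,N_k)/N_k^2\geq I_{g_\ell}(\mu^*)$; letting $\ell\to\infty$ and invoking monotone convergence yields $\geq I_K(\mu^*)\geq W_K(A)$. Since every subsequence admits a weak* convergent further subsequence, this forces $\liminf_N \mathcal{E}_K(A,N)/N^2\geq W_K(A)$.

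For the upper bound I would use a first-moment averaging argument. If $W_K(A)=\infty$ the lower bound already forces the limit to be $+\infty$, so assume $W_K(A)<\infty$ and let $\mu$ be a $K$-equilibrium measure. Drawing $x_1,\dots,x_N$ i.i.d. from $\mu$, and noting that the sum omits the diagonal while distinct samples are independent,
\[
\mathbb{E}\Big[\sum_{i\neq j}K(x_i,x_j)\Big]=N(N-1)\,I_K(\mu)=N(N-1)\,W_K(A),
\]
which is finite because $K$ is bounded below and $I_K(\mu)<\infty$. Hence some realization has energy at most $N(N-1)W_K(A)$, so $\mathcal{E}_K(A,N)/N^2\leq (1-1/N)W_K(A)\to W_K(A)$. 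Combining the two bounds proves \eqref{equ:aym}. For the moreover statement, any weak* limit $\mu^*$ of $\nu(X_N^*)$ satisfies $I_K(\mu^*)\leq W_K(A)$ by the lower-bound computation (now that the full limit is known to equal $W_K(A)$), while $I_K(\mu^*)\geq W_K(A)$ by definition of the Wiener constant; hence $\mu^*$ is a $K$-equilibrium measure.

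I expect the main obstacle to be the lower bound, specifically justifying lower semicontinuity of the energy functional against weak* convergence in the presence of the diagonal singularity of $K$: one cannot directly pass the limit through a possibly discontinuous and unbounded kernel, and the continuous-approximation-from-below combined with monotone convergence is precisely the device that circumvents this. The upper bound is comparatively routine once finiteness of $W_K(A)$ and boundedness below of $K$ guarantee that the expected energy is well defined and that the diagonal contributes nothing to the off-diagonal sum.
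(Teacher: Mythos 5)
Your proof is correct. The paper itself gives no proof of this theorem---it cites \cite{C58} and \cite[Theorem 4.2.2]{BHS20}---and your argument is essentially the standard one found there: the upper bound by averaging the off-diagonal energy against the $N$-fold product of a minimizing measure (your i.i.d. sampling step is exactly this), and the lower bound, together with the equilibrium-measure conclusion, via weak* compactness of $\mathcal{M}(A)$, approximation of the lower semicontinuous kernel from below by continuous kernels, and monotone convergence.
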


The proof of Theorem~\ref{thm:aym} for the case of a Riesz kernel can also be found in the book by Landkof~\cite[Eq. (2.3.4)]{L72}.

We now review two  important facts concerning  Riesz kernels.

\begin{theorem}[{\cite{M99}, \cite{BHS08}, \cite{BHS20}}]\label{lem:zero}
Let $A\subset \R^m$ be a compact infinite subset of an $\alpha$-dimensional $C^1$-manifold with  $A$ of positive  $\alpha$-dimensional Hausdorff measure.
\begin{enumerate}
\item[(1)] If $s\in[0,\alpha)$,  then the $R_s$-equilibrium measure on $A$ is unique. Moreover, if the potential function $U_{R_s}^{\mu}$ is constant on $A$, then $\mu$ is the $R_s$-equilibrium measure on $A$.
\item[(2)] If $s\in [\alpha,\infty)$, then $A$ has   $R_s$-capacity zero. Moreover, if $X_N^*(R_s, A)$ denotes an $R_s$-energy  optimal $N$-point configuration for $N\ge 2$, then the sequence of  normalized counting measures $\nu(X_N^*(R_s, A))$ converges to the uniform measure (normalized Hausdorff measure) on $A$  in the weak* sense as $N\rightarrow\infty$ (this is a special case of the  so-called Poppy-seed bagel theorem). 
\end{enumerate}
\end{theorem}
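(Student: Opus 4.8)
The statement assembles standard facts from classical and modern potential theory, so my plan is to treat the two parts separately, relying on the continuous energy framework and on Theorem~\ref{thm:aym}.

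For Part (1), I would first observe that since $s<\alpha\le m$ the set $A$ has positive $R_s$-capacity: because $A$ carries positive and (being compact) finite $\alpha$-dimensional Hausdorff measure, the normalized Hausdorff measure $\sigma_A$ has finite energy, the local estimate $\int_{|x-y|<1}|x-y|^{-s}\,d\sigma_A(y)\lesssim\int_0^1 r^{\alpha-1-s}\,dr<\infty$ being the key computation (for $s=0$ the log singularity is automatically integrable). Hence an $R_s$-equilibrium measure $\mu_{eq}$ exists with $W:=W_{R_s}(A)<\infty$. The central analytic input is the \emph{positive definiteness} of the Riesz kernel for $0\le s<m$: for every finite-energy signed measure $\sigma$ with $\sigma(A)=0$ one has $I_{R_s}(\sigma)\ge0$, with equality only if $\sigma=0$. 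Uniqueness then follows from strict convexity: if $\mu_1,\mu_2$ are two minimizers, applying the bilinear energy identity to $\tfrac12(\mu_1+\mu_2)$ and $\sigma=\mu_1-\mu_2$ gives $I_{R_s}(\tfrac12(\mu_1+\mu_2))=W-\tfrac14 I_{R_s}(\sigma)$, forcing $I_{R_s}(\sigma)=0$ and hence $\mu_1=\mu_2$. For the characterization I would invoke Frostman's theorem, which yields $U^{\mu_{eq}}_{R_s}=W$ quasi-everywhere on $A$; since finite-energy measures do not charge sets of capacity zero, any probability measure $\mu$ on $A$ with $U^\mu_{R_s}\equiv c$ satisfies $c=\int U^\mu_{R_s}\,d\mu_{eq}=\int U^{\mu_{eq}}_{R_s}\,d\mu=W$, so $I_{R_s}(\mu)=c=W$, making $\mu$ a minimizer and, by the uniqueness just shown, equal to $\mu_{eq}$.

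For Part (2), the capacity-zero claim is a consequence of the dimension mismatch. Because $A$ is a compact subset of an $\alpha$-dimensional $C^1$-manifold, $\mathcal{H}^\alpha(A)<\infty$, and therefore $\mathcal{H}^s(A)<\infty$ for every $s\ge\alpha$ (indeed $\mathcal{H}^s(A)=0$ for $s>\alpha$). The standard Frostman-type comparison between Hausdorff measure and Riesz energy --- any Radon measure of finite $R_s$-energy must vanish on sets of finite $\mathcal{H}^s$-measure --- then shows that no probability measure on $A$ can have finite energy, i.e. $W_{R_s}(A)=\infty$ and $\mathrm{cap}_{R_s}(A)=0$. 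The weak* convergence of $\nu(X_N^*(R_s,A))$ to the normalized Hausdorff measure is the genuinely deep part: here Theorem~\ref{thm:aym} gives no information, since its right-hand side is now $W=\infty$ and the correct normalization is $N^{1+s/\alpha}$ rather than $N^2$. I would therefore appeal to the Poppy-seed bagel theorem (Hardin--Saff; see \cite{BHS20}), whose proof rests on the short-range, local nature of the energy when $s\ge\alpha$: the interaction is dominated by near-neighbor pairs, so optimal configurations must equidistribute with respect to $\mathcal{H}^\alpha$. The main obstacle is precisely this last step --- establishing the uniform limiting distribution in the non-integrable regime requires the full minimal-energy asymptotics (rescaling and a careful local analysis), and it does not follow from the elementary potential theory used in Part (1).
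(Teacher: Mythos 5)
The paper does not prove this theorem at all: it is stated as imported background and attributed to \cite{M99}, \cite{BHS08}, \cite{BHS20}, so there is no internal argument to compare yours against; your proposal has to be judged as a reconstruction of the cited literature. In that light the architecture is right. For part (1), positive capacity via the local energy estimate (which implicitly uses upper $\alpha$-regularity of $\mathcal{H}^\alpha$ restricted to $A$ --- this is exactly what the $C^1$-manifold hypothesis supplies), and uniqueness via strict conditional positive definiteness of $R_s$ for $0\le s<m$ together with the parallelogram identity, is the standard proof. For part (2), the capacity-zero claim via $\mathcal{H}^\alpha(A)<\infty$ and the fact that finite-$R_s$-energy measures annihilate sets of finite $\mathcal{H}^s$-measure is also standard, and you are right that the equidistribution statement cannot be extracted from Theorem~\ref{thm:aym} (which becomes vacuous when $W_{R_s}(A)=\infty$); deferring it to the Poppy-seed bagel theorem of \cite{BHS20} is precisely what the paper does, and indeed the correct normalization is $N^{1+s/\alpha}$ for $s>\alpha$ (and $N^2\log N$ in the boundary case $s=\alpha$, a detail your remark glosses over).

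The one genuine flaw is the Frostman step in part (1). Frostman's theorem gives $U^{\mu_{eq}}_{R_s}\ge W$ quasi-everywhere on $A$ and $U^{\mu_{eq}}_{R_s}\le W$ on $\supp\mu_{eq}$; equality q.e.\ holds on the support, \emph{not} on all of $A$. For $s<m-2$ the maximum principle fails and the equilibrium potential can exceed $W$ on a positive-capacity part of $A$: take $A$ the closed unit ball in $\R^3$ (an admissible $A$ with $\alpha=3$) and $0<s<1$; then $\mu_{eq}$ is the normalized surface measure on the boundary sphere, $W=2^{1-s}/(2-s)<1$, while $U^{\mu_{eq}}_{R_s}(0)=1>W$. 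So your identity $c=\int U^{\mu_{eq}}_{R_s}\,d\mu=W$ does not follow as written. Fortunately the step is unnecessary: if $U^{\mu}_{R_s}\equiv c$ on $A$, then for every finite-energy probability measure $\nu$ on $A$, Fubini gives $\iint R_s(x,y)\,d\mu(x)\,d\nu(y)=\int U^{\mu}_{R_s}\,d\nu=c=I_{R_s}(\mu)$, and conditional positive definiteness applied to $\nu-\mu$ yields $0\le I_{R_s}(\nu-\mu)=I_{R_s}(\nu)-2c+c$, i.e.\ $I_{R_s}(\nu)\ge c$. Hence $\mu$ is a minimizer, and your uniqueness argument then identifies $\mu$ with $\mu_{eq}$. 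With that repair, part (1) is complete and matches the proofs in the references the paper cites.
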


\section{An overview of the problem on the sphere}\label{sec:overview}

\renewcommand*{\arraystretch}{1.2}
\subsection{Projectively equivalent configurations.}
Note that for a  kernel $K$ of the form   \eqref{equ:Kf},  the energy $E_K(X)$, $X=\{x_i\}_{i=1}^N\in \mathcal{S}(d,N)$, is invariant under any of the following operations on $X$:
\begin{equation}\label{equ:equiv}
\begin{array}{rl}
\text{(i)} & \text{Apply a unitary operator (or orthogonal operator if $\Hb=\R$) on $X$ as}\\
&\text{$\{Ux_i\}_{i=1}^N$};\\
\text{(ii)} & \text{Change the sign of any $x_i$};\\
\text{(iii)} & \text{Permute $x_1, \ldots, x_N$.}
\end{array}
\end{equation}
Any configuration $Y$  obtained from  $X$ by applying these operations  is said to be {\em projectively equivalent} to $X$.   
For example,    $\{x_1, x_2, x_3, x_4\}$ is  projectively equivalent to  $\{Ux_4, Ux_2, -Ux_3, Ux_1\}$. 

 Theorem~\ref{thm:circleP} below states that the configuration of equally spaced points on the half-circle,
\begin{equation}\label{equ:h}
X_N^{(h)}:=\{e^{i\cdot 0}, e^{i\frac{\pi}{N}},e^{i\frac{2\pi}{N}},\ldots, e^{i\frac{(N-1)\pi}{N}}\}\subset\R^2,
\end{equation}
is optimal for \eqref{equ:disg} for a certain class of kernels $K$. 

\subsection{From sphere to the projective space}\label{sec:embed}

 The projective space $\Hb\Pb^{d-1}$  can be embedded isometrically into the space of $d\times d$ Hermitian matrices, denoted by $\Hb\Mb_{d\times d}^h$ as we next describe. 
Note that   $\Hb\Mb_{d\times d}^h$     is a real vector space for both $\Hb=\R$ and  $\Hb=\C$ with inner product in $\Hb\Mb_{d\times d}^h$  defined as $\langle M_1, M_2\rangle =\trace(M_1^*M_2)$. This inner product induces the Frobenius norm $\|M\|=\|M\|_F$ on $\Hb\Mb_{d\times d}^h$.   We further note that $\Hb\Mb_{d\times d}^h$ with the Frobenius norm can be identified with  the Euclidean space $\R^m$ where $m=(d^2+d)/2$ when $ \Hb=\R$ and $m=d^2$ when $ \Hb=\C$ (e.g., when $\Hb=\R$ and $M=(M_{i,j})$ we take any ordering of  the $m$  numbers $ \sqrt{2}M_{i,j}$ for $i<j$ and $M_{i,j}$ for $i=j$).

Recalling \eqref{elldef}, we define $\Psi: \Hb\Pb^{d-1}\rightarrow \Hb\Mb_{d\times d}^h$   as $\Psi(\ell(x)):=p_x$  with $p_x:=xx^*$ and
 $x\in\Sb^{d-1}$.  Clearly, $\Psi$ is well defined (i.e., independent of the choice of the representative of the line). 
We  denote the range of $\Psi$ by $\Dc$; that is, $\Dc:=\Psi(\Hb\Pb^{d-1})=\Phi (\Sbd)$ where $\Phi:=\Psi\circ\ell$.
 
For $x,y\in\Sbd$, the following well known equality (see, e.g. \cite{CKM16}) establishes that $\Psi$ is an isometry:
\begin{align}\label{equ:essential}
\rho\left(\ell(x),\ell(y)\right)^2=2-2|\langle x,y\rangle|^2=\|p_x-p_y\|_F^2=\|\Phi(x)-\Phi(y)\|_F^2.
\end{align}
It is used, for example, in works on phase retrieval, see e.g.~\cite{CS13, GKK15}. For the reader's convenience
we note that the middle equality in \eqref{equ:essential} follows using the cyclic property of the trace:
$$
\|p_x-p_y\|^2_F =2-\langle p_x, p_y\rangle-\langle p_y, p_x\rangle
=2-\trace( y^*xx^*y)- \trace(x^*yy^*x)=2-2|\langle x, y\rangle|^2,
$$
from which we also get
\begin{equation}\label{equ:inner}
\langle p_x, p_y\rangle=|\langle x,y\rangle|^2.
\end{equation}
Note that \eqref{equ:essential} shows that $\Psi$ is    an isometric embedding of $\Hb\Pb^{d-1}$ in $\Hb\Mb_{d\times d}^h$,  and so we identify  $\Hb\Pb^{d-1}$ with $\Dc$.   We remark that   $\Dc$ is a real analytic manifold whose    dimension   $\dim (\Dc)=\dim(\Hb\Pb^{d-1})$   is $d-1$ in the case $\Hb=\R$ and  $2d-2$ in the case $\Hb=\C$ (see \cite{BH14} or \cite{L03}).  


Now we are able to consider a kernel   of the form $K(x,y)= f\left(\sqrt{ 2-2|\langle x,y\rangle|^2}\right)$ on $\Sbd\times \Sbd$  as a kernel 
 $K(x,y)=\widetilde K(p_x,p_y)= f(\|p_x-p_y\|)$ on  $\Dc\times \Dc$. 
Specifically  the projective Riesz $s$-kernel (see \eqref{equ:proj}) can be  reexpressed as
\renewcommand*{\arraystretch}{2}
\begin{equation}\label{equ:red}
G_s(x,y)=\left\{\begin{array}{ll}\log\frac{2}{\|p_x-p_y\|^2_F}=\log2+2R_s(p_x,p_y), & s=0\\ \rule{0pt}{30pt} \frac{2^{s/2}}{\|p_x-p_y\|_F^{s}}=2^{s/2}R_s(p_x,p_y), & s> 0. \end{array}\right.
\end{equation} This   allows us to reformulate the minimal projective energy problem in terms of the Riesz minimal energy problem on the set $\Dc$. This technique was also employed in \cite{CP15}. 
In  the next two sections we  apply  results of Section \ref{sec:pre} for the problems
\renewcommand*{\arraystretch}{1.2}
\begin{equation}\label{equ:Rcont}
\min_{\mu\in\Mcc(\Dc)}I_{R_s}(\mu)
\end{equation}
\begin{equation}\label{equ:rieszC}
\min_{\{p_i\}_{i=1}^N\subset\Dc} \sum_{i\neq j}R_s(p_i,p_j).
\end{equation}

Similarly, the  Grassmannian problem \eqref{equ:sep} is equivalent to the best-packing problem \eqref{equ:bestpacking} on the projective space, which is to maximize the smallest pairwise distance between all the lines (frame vectors). Let $P=\{p_i\}_{i=1}^N\subset\Dc$. For any point $p_i\in \Dc$, we can find $x_i\in\Sbd$ such that $p_i=x_ix_i^*$. By \eqref{equ:essential},
\begin{equation}\label{equ:coh}
\delta^2(P)=\min_{i\neq j}\|p_i-p_j\|^2=\min_{i\neq j}\left(2-2|\langle x_i,x_j\rangle|\right)=2-2\max_{i\neq j}|\langle x_i,x_j\rangle|=2-2\xi(X).
\end{equation}
So
\begin{align}\label{equ:2}
\delta^2_N(\Dc) =\max_{\{p_i\}_{i=1}^N\subset \Dc}\delta^2(P)= \max_{\{x_i\}_{i=1}^N\subset \Sbd}(2-2\xi(X))=2-2\xi_{N}.
\end{align}
The last equality is from the definition \eqref{equ:sep}.

For any Borel probability measure $\mu$ on the sphere, this embedding also induces the pushforward (probability) measure $\mu_\proj$ on $\Dc\subset\Hb\Mb_{d\times d}^h$. 
 By definition of a pushforward measure, 
\begin{equation}\label{equ:mup}
\mu_\proj(\mathcal{B}):=\mu(\Phi^{-1}(\mathcal{B}))), \quad \text{for   Borel measurable }\mathcal{B}\subset \Dc.
\end{equation} We shall also write $\Phi(\mu)$ for $\mu_\proj$. 

To better understand $\mu_\proj$, we further consider the symmetrization $\mu_\sym$ of a measure $\mu\in\Mcc(\Sbd)$ defined as
\renewcommand{\arraystretch}{2.2}
\begin{equation}\label{equ:sym}
\mu_\sym(B)=\left\{\begin{array}{ll}\frac{\mu(B)+\mu(-B)}{2}, &\Hb=\R\\
\frac{1}{2\pi}\int_0^{2\pi}\mu(e^{i\theta}B)d\theta,&\Hb=\C\end{array}\right.,
\end{equation} for   Borel measurable   ${B}\subset\Sbd.$
\renewcommand{\arraystretch}{1}


It is not difficult to show that $\mu_\sym=\tilde\mu_\sym$ if and only if the pullback measures $\mu\circ\ell^{-1}$ and $\tilde\mu\circ\ell^{-1}$ agree.  The injectivity of $\Psi$ then shows  \begin{equation}\label{equ:mu}
\Phi(\mu)=\Phi(\tilde\mu)\Longleftrightarrow\mu_\sym=\tilde\mu_\sym.
\end{equation} 

 Let $\sd$ be the uniform measure (normalized surface measure) on $\Sbd$. Then  $\Phi(\sd)$, the pushforward measure of $\sd$ under $\Phi$, is the uniform measure on $\Dc$. In fact, $\Phi(\sd)$ is the Haar invariant measure induced by the unitary group (see \cite[Section 4.2]{CP15}).  

\section{Large $N$ behavior of optimal   configurations}\label{sec:cont}
We first focus on the continuous problem
\begin{equation}\label{equ:cont}
\min_{\mu\in\Mcc(\Sbd)}\iint_{\Sbd\times\Sbd}G_s(x,y)\ d\mu(x)d\mu(y),
\end{equation}
The results are of independent interest, and will be used in Section \ref{sec:goodframe}. 

For future reference, we set
$$I_s(\mu):=I_{R_s}(\mu)=\iint_{\Dc\times \Dc}R_s(x,y)\ d\mu(x)d\mu(y)$$  and
$$J_s(\mu):= I_{G_s}(\mu)=\iint_{\Sbd\times\Sbd}G_s(x,y)\ d\mu(x)d\mu(y).$$

As previously discussed,  the projective space $\Dc$  embedded in  $\R^m$ is a smooth ($C^\infty$) compact manifold, so Theorem \ref{lem:zero}  applies with $A=\Dc$ and $\alpha=\dim(\Dc)$.  

\begin{theorem}\label{thm:contP} For the projective Riesz kernel $G_s(x,y)$, the following properties hold.
\begin{enumerate}
\item[(1)] If $0\leq s<\dim (\Dc)$, then $\mu$ is a $G_s$-equilibrium measure on $\Sb^{d-1}$ if and only if its symmetrized measure $\mu_\sym$ is the normalized surface measure $\sd$ on $\Sbd$.
\item[(2)] If $s\geq \dim (\Dc)$, then $\Sb^{d-1}$ has $G_s$-capacity  0.
\item[(3)] Let  $s\ge 0$  and let $X_N^*$ be a $G_s$-optimal $N$-point configuration on $\Sbd$ for $N\ge 2$.  Then the sequence of normalized counting measures
$ \nu(\Phi(X_N^*))$ converges weak* to the uniform measure $\Phi(\sd)$ on $\Dc$ as $N\rightarrow\infty$.
\end{enumerate}
\end{theorem}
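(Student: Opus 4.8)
The plan is to transport the entire problem from $\Sbd$ to the projective model $\Dc$ via the isometric embedding $\Phi$ and the identity \eqref{equ:red}, and then read off all three statements from the Riesz-kernel facts in Theorem~\ref{lem:zero} together with the asymptotic principle in Theorem~\ref{thm:aym}. The starting observation is that \eqref{equ:red} turns $G_s$-energy into $R_s$-energy on $\Dc$ up to an affine change: for any $\mu\in\Mcc(\Sbd)$ with pushforward $\Phi(\mu)\in\Mcc(\Dc)$, a change of variables gives $J_s(\mu)=2^{s/2}I_s(\Phi(\mu))$ when $s>0$ and $J_0(\mu)=\log 2+2\,I_0(\Phi(\mu))$ when $s=0$, and the analogous identities hold for the discrete energies $E_{G_s}$ and $E_{R_s}$. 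Since $\Phi$ maps $\Sbd$ onto $\Dc$, the pushforward map $\mu\mapsto\Phi(\mu)$ is surjective onto $\Mcc(\Dc)$, so minimizing $J_s$ over $\Mcc(\Sbd)$ is equivalent to minimizing $I_s$ over $\Mcc(\Dc)$, and $\mu$ is a $G_s$-equilibrium measure precisely when $\Phi(\mu)$ is an $R_s$-equilibrium measure on $\Dc$. Because $\Dc$ is a compact $C^\infty$ manifold of dimension $\alpha=\dim(\Dc)$ with positive $\alpha$-dimensional Hausdorff measure, Theorem~\ref{lem:zero} applies with $A=\Dc$.

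For part~(1), assume $0\le s<\alpha$. The key lemma I would prove is that the uniform measure $\Phi(\sd)$ is \emph{the} $R_s$-equilibrium measure on $\Dc$. By Theorem~\ref{lem:zero}(1) it suffices to show that its potential $U_{R_s}^{\Phi(\sd)}$ is constant on $\Dc$. This follows from symmetry: the unitary (orthogonal) group acts on $\Dc$ by $p\mapsto UpU^*$, this action is transitive on rank-one projections, it preserves the Frobenius distance (hence fixes the kernel $R_s$), and $\Phi(\sd)$ is exactly the Haar-invariant measure for this action as recorded at the end of Section~\ref{sec:embed}; therefore the potential is invariant, hence constant. With this identification, $\mu$ is a $G_s$-equilibrium measure iff $\Phi(\mu)$ equals the unique $R_s$-equilibrium measure $\Phi(\sd)$, which by \eqref{equ:mu} holds iff $\mu_\sym=(\sd)_\sym=\sd$, the last equality because $\sd$ is already rotation- and antipodally invariant.

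For part~(2), let $s\ge\alpha$ and note that $\alpha\ge 1$ for $d\ge 2$, so this range lies in $s>0$. Theorem~\ref{lem:zero}(2) gives $W_{R_s}(\Dc)=\infty$, and the identity $W_{G_s}(\Sbd)=2^{s/2}W_{R_s}(\Dc)$ then forces $W_{G_s}(\Sbd)=\infty$, i.e. $\Sbd$ has $G_s$-capacity $0$. For part~(3), I first observe that an optimal $G_s$-configuration $X_N^*$ avoids coincident lines (otherwise the energy is $+\infty$), so $\Phi(X_N^*)$ is a genuine $N$-point configuration on $\Dc$, and by the discrete energy identity it is $R_s$-optimal on $\Dc$. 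When $s\ge\alpha$, Theorem~\ref{lem:zero}(2) directly yields $\nu(\Phi(X_N^*))\to\Phi(\sd)$ in the weak$^*$ sense. When $0\le s<\alpha$, I instead apply Theorem~\ref{thm:aym} with $A=\Dc$ and $K=R_s$: every weak$^*$ cluster point of $\nu(\Phi(X_N^*))$ is an $R_s$-equilibrium measure on $\Dc$, which by part~(1) is the unique measure $\Phi(\sd)$; weak$^*$ compactness of $\Mcc(\Dc)$ then upgrades ``unique cluster point'' to convergence.

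The main obstacle is the equilibrium-measure identification in part~(1): establishing that $\Phi(\sd)$ has constant $R_s$-potential on $\Dc$. Everything else is bookkeeping through the dictionary \eqref{equ:red}. The cleanest route to constancy is the group-invariance argument above, for which I must verify carefully that $\Phi(\sd)$ is genuinely the Haar measure on $\Dc$ and that the unitary action is both transitive and Frobenius-isometric; these facts are standard but deserve explicit statement, since the ``if the potential is constant then $\mu$ is the equilibrium measure'' direction of Theorem~\ref{lem:zero}(1) is exactly what converts this symmetry into the desired uniqueness.
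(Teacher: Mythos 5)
Your proposal is correct and follows essentially the same route as the paper: reduce everything to the Riesz problem on $\Dc$ via \eqref{equ:red}, identify $\Phi(\sd)$ as the unique $R_s$-equilibrium measure through constancy of its potential together with Theorem~\ref{lem:zero}(1), transfer back via \eqref{equ:mu}, and in part (3) combine Theorem~\ref{thm:aym} (for $0\le s<\dim\Dc$) with Theorem~\ref{lem:zero}(2) (for $s\ge\dim\Dc$). The only cosmetic difference is that the paper gets the constant potential from rotational invariance of $\sd$ on the sphere and pushes it through \eqref{equ:UU}, whereas you argue directly on $\Dc$ via the unitary-conjugation action and Haar invariance of $\Phi(\sd)$ --- the same symmetry fact viewed from the other side of the isometry.
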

\begin{proof}
(1) When $s>0$, by \eqref{equ:red} and the definition of a pushforward measure, 
\begin{equation}\label{equ:UU}
U^{\mu}_{G_s}(x)=\int_{\Sbd}G_s(x,y)\ d\mu(y)=\int_\Dc\frac{2^{2/s}}{\|p_x-p\|^s}d\mu_\proj(p)=\text{const}\cdot U_{R_s}^{\mu_\proj}(p_x).
\end{equation}
A similar equality holds for the log case $s=0$: $U^{\mu}_{G_s}(x)=\text{const}*U_{R_s}^{\mu_\proj}(p_x)+\text{const}$. 

Thus for $s\ge 0$, the uniform measure $\sd$ produces a constant potential function with the kernel $G_s$, so $\Phi(\sd)$ also produces a constant potential function with the Riesz kernel $R_s$. By Theorem~\ref{lem:zero}(1), $\Phi(\sd)$ must be the unique minimizer of \eqref{equ:Rcont}. 

On the other hand, similar to \eqref{equ:UU},
\begin{align}\notag
J_s(\mu)&=\int_{\Sbd}\int_{\Sbd}G_s(x,y)\ d\mu(x)d\mu(y)\\
\label{equ:JI}&=\int_\Dc\int_\Dc\frac{2^{2/s}}{\|p -p'\|^s}d\mu_\proj(p)d\mu_\proj(p')=\text{const}\cdot I_s(\mu_\proj).
\end{align}
Again a similar equality holds for the log case. This implies that  $\mu$ is a minimizer of \eqref{equ:cont} if and only if $\Phi(\mu)=\mu_\proj$ is a minimizer of \eqref{equ:Rcont}, which has to be $\Phi(\sd)$.
By \eqref{equ:mu}, this is equivalent to $\mu_\sym=(\sigma_{d-1})_{\sym}=\sigma_{d-1}$.
This proves that $\mu$ is an equilibrium measure if and only if its symmetrized measure $\mu_\sym$ is $\sd$.

(2) With the relation \eqref{equ:JI}, this is a direct consequence of Theorem \ref{lem:zero}(2).

(3) For the discrete case, similar to \eqref{equ:JI}, we have $E_{G_s}(X_N)=\text{const}\cdot E_{R_s}(\Phi(X_N))+\text{const}$. So $X_N^*$ be a $G_s$-optimal $N$-point configuration on $\Sbd$ if and only if $\Phi(X^*_N)$ is an optimal configuration for the Riesz kernel $R_s$ on $\Dc$.

By Theorem \ref{thm:aym}, we conclude that the normalized counting measure $\nu(\Phi(X_N^*))$ converges to the $R_s$-equilibrium measure on $\Dc$ in the weak* sense. As shown in part (1), this unique equilibrium measure is $\Phi(\sd)$, when $s\in[0,\dim (\Dc))$. When $s\geq\dim (\Dc)$, by Theorem \ref{lem:zero}(2), we also have $\nu(\Phi(X_N^*))$ converges to $\Phi(\sd)$.


\end{proof}

\section{Discrete minimal energy problem}\label{sec:dis}
In this section we consider discrete extremal energy problems,  for a general class of projective kernels  of the form \eqref{equ:Kf}. Once again, the optimal configuration is an equivalent class in the sense of \eqref{equ:equiv}.  Theorem \ref{thm:circleP} is for the 1-dimensional sphere in the real vector space while Theorem \ref{thm:eat} is a general result over $\Hb$. Corollary \ref{cor:r2} addresses the special projective Riesz kernel case \eqref{equ:projective}.

%
%

\begin{theorem} \label{thm:circleP}
If $f:(0,\sqrt{2}]\rightarrow\R$ is a non-increasing convex function defined at zero by the (possibly infinite) value $\lim_{t\rightarrow0^+}f(t)$, then $X_N^{(h)}$ given in \eqref{equ:h} is an optimal configuration on $\R \Pb^1$ for the problem \eqref{equ:disg} where $K$ is as in \eqref{equ:Kf}. If, in addition, $f$ is strictly convex, then up to the equivalence relation in \eqref{equ:equiv}, no other $N$-point configuration is optimal.
\end{theorem}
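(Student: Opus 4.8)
The plan is to reduce the problem on $\R\Pb^1$ to the circle setting of Theorem~\ref{thm:circleR} via the embedding $\Phi$ described in Section~\ref{sec:embed}. First I would observe that for $\Hb=\R$ and $d=2$, the projective space $\R\Pb^1$ embeds under $\Phi$ into the real vector space $\R\Mb_{2\times 2}^h$, and I claim the image $\Dc=\Phi(\Sbb^1)$ is precisely a circle. Concretely, writing $x=(\cos\theta,\sin\theta)^T$, the matrix $p_x=xx^*$ has entries that are quadratic in $\cos\theta,\sin\theta$; rewriting these via double-angle formulas $\cos^2\theta=\tfrac12(1+\cos2\theta)$, $\sin^2\theta=\tfrac12(1-\cos2\theta)$, $\cos\theta\sin\theta=\tfrac12\sin2\theta$ shows that $p_x$ traces out a circle (of radius $1/\sqrt2$, centered at $\tfrac12 I_2$) as $\theta$ ranges over $[0,\pi)$, with the parameter $2\theta$ sweeping the full $2\pi$. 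The key point is that the antipodal identification $x\sim -x$ on $\Sbb^1$ becomes the doubling $\theta\mapsto 2\theta$, so that equally spaced points on the half-circle \eqref{equ:h} map to equally spaced points on the circle $\Dc$.

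Next I would invoke \eqref{equ:essential}, which tells us $K(x,y)=f(\rho(\ell(x),\ell(y)))=f(\|p_x-p_y\|_F)$, so that the projective energy $E_K(X)$ equals the Riesz-type energy $\sum_{i\neq j}f(\|p_i-p_j\|_F)$ of the configuration $\Phi(X)$ on the circle $\Dc$. Since the diameter of $\Dc$ is $\sqrt2$ (attained by antipodal lines, i.e. orthogonal $x,y$), the domain of $f$ is exactly $(0,\sqrt2]$, matching the hypothesis. The function $f$ is non-increasing and convex by assumption, so Theorem~\ref{thm:circleR} applies to the kernel $(p,q)\mapsto f(\|p-q\|_F)$ on this circle of radius $r=1/\sqrt2$: the $N$ equally spaced points minimize the energy, and these are exactly the images $\Phi(X_N^{(h)})$ by the double-angle computation above. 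Pulling back through $\Phi$ yields that $X_N^{(h)}$ is optimal for \eqref{equ:disg}.

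For the uniqueness clause under strict convexity, Theorem~\ref{thm:circleR} guarantees that on the circle $\Dc$ no configuration other than the equally spaced one is optimal. I would then transfer this rigidity back through $\Phi$, using \eqref{equ:mu} and the fact that $\Psi$ is injective on $\R\Pb^1$: any optimal configuration on $\Sbb^1$ must map to equally spaced points on $\Dc$, hence must agree with $X_N^{(h)}$ after the projective identifications (antipodal sign changes) and rotation, which is precisely the equivalence \eqref{equ:equiv}. The only subtlety is bookkeeping the two-to-one nature of $\Phi$ and confirming that the preimage of an equally spaced configuration on $\Dc$ is, up to \eqref{equ:equiv}, unique.

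The main obstacle I anticipate is the careful verification that $\Dc$ really is a genuine circle with the half-circle points mapping to equally spaced circle points with the correct angular spacing, and that the full family \eqref{equ:equiv} of projectively equivalent configurations corresponds exactly to the rotational and labeling symmetries of the circle. In particular one must check that the factor-of-two angle doubling sends the spacing $\pi/N$ on the half-circle to spacing $2\pi/N$ on $\Dc$, so that $N$ points remain $N$ equally spaced points (rather than collapsing or doubling up), and that sign changes $x_i\mapsto -x_i$ act trivially on $\Dc$, consistent with $\Phi$ being well-defined on lines. Once this geometric dictionary is pinned down, the rest is a direct appeal to Theorem~\ref{thm:circleR}.
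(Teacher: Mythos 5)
Your proposal is correct and takes essentially the same route as the paper: both reduce the problem to the image circle $\Dc=\Phi(\Sbb^1)$ of radius $1/\sqrt{2}$ via the isometry \eqref{equ:essential} and then invoke Theorem~\ref{thm:circleR}, noting that $\Phi$ carries equally spaced points on the half-circle to equally spaced points on $\Dc$. Your double-angle parametrization and the uniqueness bookkeeping through the two-to-one map are simply more explicit versions of steps the paper leaves to the reader ("one can easily show\dots").
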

\begin{proof}
By \eqref{equ:essential}, $K(x,y)=f(\|p_x-p_y\|)$, so we need to consider the minimal energy problem \eqref{equ:disg} with the kernel function to be $f(\|x-y\|)$ on the compact set $\Dc=\Phi(\Sbb^1)$.
The map $\Phi: \Sb^1\rightarrow \R\Mb_{2\times2}^h$ is precisely 
$$\Phi: (x,y)\rightarrow\left[\begin{array}{cc}x^2&xy\\xy& y^2
\end{array}\right].$$ 
As mentioned at the beginning of Section \ref{sec:embed}, $\R\Mb_{2\times2}^h$ is identified with $\R^3$ using the mapping $\left[\begin{array}{cc}x^2&xy\\xy& y^2
\end{array}\right]\rightarrow (x^2,\sqrt{2}xy,y^2)$. This way, $\Dc$ is a circle in $\R^3$  with radius $1/\sqrt{2}$.

With $r=1/\sqrt{2}$, the function $f$ satisfies the assumptions of Theorem \ref{thm:circleR}, so $$\sum_{i\neq j}K(x_i, x_j)=\sum_{i\neq j} f(\|p_{x_i}- p_{x_j}\|)$$ is minimized if 
 $p_{x_1}, p_{x_2},\ldots, p_{x_N}$ are equally spaced on the circle $\Dc$. One can  easily show that $\Phi$ maps equally spaced points on half $\Sb^1$ to equally spaced points on $\Dc$. So minimizers of \eqref{equ:disg} are precisely the equivalence class of equally spaced points on half of $\Sb^1$. 
 \end{proof}

\begin{remark}
It is well known that $\R\Pb^{d-1}$ is a compact Riemannian manifold. However,    $\R\Pb^{d-1}$  is topologically equivalent to a sphere only when $d=2$. \end{remark}

\begin{remark}
The frame potential kernel $|\langle x,y\rangle|^2$ can be written as $g(\sqrt{2-2|\langle x,y\rangle|^2})$, where $g(t)=1-t^2/2$ is not convex on $[0,\sqrt{2}]$. As a consequence, Theorem \ref{thm:circleP} cannot be applied to the frame potential kernel. The conclusion of Theorem \ref{thm:circleP} is however true, but there is no uniqueness (see \cite{BF03}). 
\end{remark}


The discrete minimal energy problem for the Riesz $s$-kernel is in general very hard as mentioned previously. The situation is slightly better for  kernels that are a function of absolute value of inner product, as we have the following general characterization when an \emph{equiangular tight frame} (ETF) exists. A frame $X=\{x_i\}_{i=1}^N$ is \emph{equiangular} if $\frac{|\langle x_i,x_j\rangle|}{\|x_i\|\|x_j\|}$ is a constant for all $i\neq j$. An ETF  is a frame that is equiangular and tight.  For frames in $\Sc(d, N)$, a necessary condition for the  existence of ETF is  $N\leq d(d+1)/2$ for $\Hb=\R$ and $N\leq d^2$ for $\Hb=\C$. The coherence has the famous Welch bound
\begin{equation}\label{equ:welch}
\xi(X)\geq\sqrt{\frac{N-d}{d(N-1)}}, \quad\text{for all }X\in\Sc(d,N),
\end{equation} 
and is achieved by ETFs. This can be easily derived from the relation (cf. \cite{CKM16})
\begin{equation}\label{equ:fpf}
N+N(N-1)\xi(X)^2\geq\sum_{i,j=1}^N|\langle x_i,x_j\rangle|^2=\|XX^*-\frac{N}{d}I_d\|_F^2+\frac{N^2}{d}\geq\frac{N^2}{d}.
\end{equation} 
The Welch bound also coincides with the simplex bound of the chordal distance in \cite{CHS96}.
We refer interested readers to \cite{STDH07} for more details and \cite{FM15} for a table on existing ETFs. The second inequality in \eqref{equ:fpf} also shows that the frame potential is minimized when the frame is tight.

The second theorem is for both the real and complex case. 
\begin{theorem}\label{thm:eat}
Let $\tilde f:(0,2]\rightarrow\R$ be a strictly convex and decreasing function defined at $t=0$ by the (possibly infinite) value $\lim_{t\rightarrow0^+}\tilde f(t)$, and $ f:(0,\sqrt{2}]\rightarrow\R$ be a strictly convex and decreasing function defined at $t=0$ by the (possibly infinite) value $\lim_{t\rightarrow0^+} f(t)$. If $N$ and $d$ are such that an ETF exists, then
\begin{enumerate}
\item[(i)]  it is the unique optimal configuration of \eqref{equ:disg} for the kernel $\widetilde{K}(x,y)=$ \\ $\tilde f(2-2|\langle x,y\rangle|^2)$;
\item[(ii)] it is also the unique optimal configuration of \eqref{equ:disg} for the kernel $K(x,y)=f(\sqrt{2-2|\langle x,y\rangle|^2})$. 
\end{enumerate}
\end{theorem}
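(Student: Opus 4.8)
The plan is to reduce both parts to a single Jensen-inequality argument paired with the Welch bound identity \eqref{equ:fpf}. I would prove (i) directly and then obtain (ii) as a corollary by absorbing the square root into the function. Throughout, abbreviate $t_{ij}:=2-2|\langle x_i,x_j\rangle|^2\in(0,2]$ for $i\ne j$, so that the energy for the kernel $\widetilde K$ is $E_{\widetilde K}(X)=\sum_{i\ne j}\tilde f(t_{ij})$.

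For (i), I would first apply Jensen's inequality to the convex $\tilde f$:
\[
E_{\widetilde K}(X)\ \ge\ N(N-1)\,\tilde f\!\left(\bar t\right),\qquad \bar t:=\frac{1}{N(N-1)}\sum_{i\ne j}t_{ij}.
\]
Next I rewrite $\bar t=2-\frac{2}{N(N-1)}\sum_{i\ne j}|\langle x_i,x_j\rangle|^2$ and invoke \eqref{equ:fpf}, which gives $\sum_{i\ne j}|\langle x_i,x_j\rangle|^2\ge \frac{N(N-d)}{d}$; hence $\bar t\le t^{*}:=2-2\frac{N-d}{d(N-1)}$. Since $\tilde f$ is decreasing, $\tilde f(\bar t)\ge \tilde f(t^{*})$, and therefore $E_{\widetilde K}(X)\ge N(N-1)\,\tilde f(t^{*})$. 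For an ETF every $|\langle x_i,x_j\rangle|^2$ equals the squared Welch bound $\frac{N-d}{d(N-1)}$, so every $t_{ij}=t^{*}$ and the lower bound is attained with value $N(N-1)\tilde f(t^{*})$; thus every ETF is optimal.

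To identify the minimizers (and establish uniqueness in the stated sense) I would examine the two equalities in the chain. Equality in Jensen for the \emph{strictly} convex $\tilde f$ forces all $t_{ij}$ to coincide, i.e.\ $X$ is equiangular. A strictly convex decreasing function is strictly decreasing on the interior of its domain (a flat piece would contradict strict convexity), so $\tilde f(\bar t)=\tilde f(t^{*})$ together with $\bar t\le t^{*}$ forces $\bar t=t^{*}$; by \eqref{equ:fpf}, equality in the Welch bound is equivalent to $\|XX^{*}-\frac{N}{d}I_d\|_F=0$, i.e.\ $X$ is tight. Hence every minimizer is equiangular and tight, and conversely all ETFs attain the same minimal value, which proves (i).

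Finally I would deduce (ii) from (i). Setting $g(t):=f(\sqrt t)$ for $t\in(0,2]$, one has $K(x,y)=g(2-2|\langle x,y\rangle|^2)$, so it suffices to check that $g$ satisfies the hypotheses placed on $\tilde f$. Monotonicity is immediate: $\sqrt{\,\cdot\,}$ is increasing and $f$ is decreasing, so $g$ is decreasing. For convexity I would use the composition principle rather than a second-derivative computation (differentiability is not assumed): since $\sqrt{\,\cdot\,}$ is concave increasing and $f$ is convex decreasing, $g=f\circ\sqrt{\,\cdot\,}$ is convex, and for $a\ne b$ the strict concavity of $\sqrt{\,\cdot\,}$ combined with the strict monotonicity of $f$ makes the inequality strict, so $g$ is strictly convex. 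Then (ii) is exactly (i) applied to $g$. I expect the composition step to be the main delicate point: one must argue that strict convexity survives composition with $\sqrt{\,\cdot\,}$, which relies on the observation that a strictly convex decreasing $f$ is in fact strictly decreasing; the purely convexity-theoretic argument is what lets the proof avoid any smoothness assumption on $f$ and $\tilde f$.
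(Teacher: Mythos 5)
Your proof is correct and follows essentially the same route as the paper: a Jensen-inequality step (equality iff equiangular) combined with the frame potential bound \eqref{equ:fpf} (equality iff tight), with part (ii) reduced to part (i) via $g(t)=f(\sqrt{t})$; your quantities $t_{ij}$ are exactly the paper's squared chordal distances $\|P_i-P_j\|_F^2$, so the two arguments differ only in notation and in the order of the two inequalities. If anything, your treatment is slightly more careful than the paper's on two points it leaves implicit: that strict convexity plus monotonicity gives injectivity of $\tilde f$ (needed to pass from $\tilde f(\bar t)=\tilde f(t^{*})$ to $\bar t=t^{*}$), and that strict convexity of $f$ survives composition with $\sqrt{\,\cdot\,}$ without any smoothness assumption.
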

\begin{proof}
(i) From \eqref{equ:essential}, $\widetilde{K}(x,y)=\tilde f(\|p_x-p_y\|^2)$.
Let $X=\{x_1, x_2,\ldots, x_N\}$ be an arbitrary configuration on the sphere and set $P_i:=p_{x_i}=x_ix_i^*$. Then, by \eqref{equ:inner},
\begin{align*}
J:&=\sum_{i\neq j}\|P_i-P_j\|^2=\sum_{i=1}^N\sum_{j\neq i}(2-2\langle P_i,P_j\rangle)= \sum_{i=1}^N \left(2(N-1)-2\sum_{j=1}^N \langle P_i,P_j\rangle+2\right)\\
&=2N^2-2\sum_{i,j=1}^N|\langle x_i,x_j\rangle|^2\leq 2N^2-2N^2/d,
\end{align*}
where the last inequality follows from \eqref{equ:fpf}. Thus, 
\begin{align}
&E_{\widetilde{K}}(X)=\frac{N(N-1)}{1}\cdot\frac{1}{N(N-1)}\sum_{i\neq j}\tilde f(\|P_i-P_j\|^2) \nonumber \\
&\geq \frac{N(N-1)}{1} \tilde f\left(\sum_{i\neq j}\frac{1}{N(N-1)}\|P_i-P_j\|^2\right)= N(N-1) \tilde f\left(\frac{1}{N(N-1)} J\right)\\
&\geq N(N-1)\tilde f\left(\frac{2N^2-2N^2/d}{N(N-1)}\right)=N(N-1)\tilde f\left(\frac{2N(1-1/d)}{(N-1)}\right).\nonumber
\end{align}

The first inequality becomes equality if and only if  $|\langle x_i,x_j\rangle|$ is   constant for   $i\neq j$; i.e., $X$ is equiangular. The second inequality becomes equality if and only if $X$ is a unit norm tight frame. Therefore, if an ETF exists for a given $d$ and $N$, then this ETF is the unique $\widetilde{K}$-energy minimizer.


Part (ii) is a direct consequence of (i). Indeed, $f(\sqrt{2-2|\langle x,y\rangle|^2})=g(2-2|\langle x,y\rangle|^2)$, where $g(t)=f(\sqrt{t})$.  Since $f$ is decreasing and convex, the same holds for $g$, to which we apply (i).
\end{proof}

\begin{remark}\label{rem:p}
Theorem \ref{thm:eat} shows that an ETF is {\em universally optimal} (see \cite{CK07,CKM16}) in the sense that it minimizes the energy for any potential that is a completely monotone function of distance squared in the projective space.  
  
The assumption of   Theorem \ref{thm:eat} (part (i)) is weaker than that of Theorem \ref{thm:circleP} as reflected in the above proof.  
For example, Theorem \ref{thm:eat} part (i) recovers Proposition 3.1 of \cite{EO12} since $|\langle x,y\rangle|^p=f(1-|\langle x,y\rangle|^2)$ with $f(t)=(1-t)^{p/2}$. It is easy to verify that $f(t)$ is decreasing and convex on $[0,1]$ when $p>2$. However $(1-t^2)^{p/2}$ is not convex, and therefore part (ii) cannot be used to recover Proposition 3.1 of \cite{EO12}. We refer the interested reader to \cite{CGGKO20} for more results on $p$-frame potential.
\end{remark}


Both Theorems \ref{thm:circleP} and \ref{thm:eat}  apply to the projective Riesz kernel since $\log {1}/{t}$ and $ {1}/{t^{s}}$ are strictly decreasing and strictly convex. 
\begin{corollary}\label{cor:r2}
For the projective Riesz $s$-kernel minimization problem \eqref{equ:projective} when $s\in[0,\infty)$,
\begin{enumerate}
\item[(i)]   the configuration $X^{(h)}_N$  defined in \eqref{equ:h} is   optimal for $\Sb^1\subset\R^2$;
\item[(ii)] if it exists, an ETF is the optimal configuration for $\Sbd\subset\Hb^d$.
\end{enumerate}
\end{corollary}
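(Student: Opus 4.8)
The plan is to recognize that this corollary is a direct specialization of the two preceding theorems once the projective Riesz $s$-kernel is rewritten as a function of the chordal distance $\rho=\sqrt{2-2|\langle x,y\rangle|^2}$. The first step is therefore to identify the profile function $f$ for which $G_s(x,y)=f(\rho)$. Using the identity $1-|\langle x,y\rangle|^2=\rho^2/2$, I would compute that for $s>0$ one has $G_s=(\rho^2/2)^{-s/2}=2^{s/2}\rho^{-s}$, so that $f(t)=2^{s/2}t^{-s}$, while for $s=0$ one has $G_0=\log(2/\rho^2)=\log 2-2\log t$, so that $f(t)=\log 2-2\log t$. In both cases $f$ is defined on the domain $(0,\sqrt{2}]$, which is exactly the range of $\rho$.

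The second step is to verify that this $f$ meets the hypotheses of both theorems, namely that it is strictly decreasing and strictly convex. This is a routine derivative computation: for $s>0$, $f'(t)=-s\,2^{s/2}t^{-s-1}<0$ and $f''(t)=s(s+1)2^{s/2}t^{-s-2}>0$; for $s=0$, $f'(t)=-2/t<0$ and $f''(t)=2/t^2>0$. The additive constant $\log 2$ and the positive multiplicative constant $2^{s/2}$ are irrelevant to the location of the minimizers and affect neither the convexity nor the monotonicity, so $f$ itself satisfies the required conditions.

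For part (i) I would then invoke Theorem \ref{thm:circleP}, whose hypothesis is precisely that $f:(0,\sqrt{2}]\to\R$ is non-increasing and convex: its conclusion is that $X_N^{(h)}$ is optimal on $\R\Pb^1$, that is, for $\Sb^1\subset\R^2$ (and uniqueness up to the equivalence \eqref{equ:equiv} follows because $f$ is in fact strictly convex). For part (ii) I would invoke Theorem \ref{thm:eat}(ii), whose hypothesis is exactly that $f:(0,\sqrt{2}]\to\R$ be strictly convex and decreasing, and whose conclusion is that an ETF, when it exists, is the unique optimal configuration for the kernel $f(\sqrt{2-2|\langle x,y\rangle|^2})=G_s$.

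Since both theorems apply verbatim, there is no genuine obstacle here; the only point requiring a moment's care is the bookkeeping of the constants $\log 2$ and $2^{s/2}$ when passing from $G_s$ to $f$, and checking that the domain $(0,\sqrt{2}]$ of $\rho$ coincides with the domains demanded by the two theorems. Everything else reduces to an immediate citation.
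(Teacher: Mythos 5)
Your proposal is correct and follows essentially the same route as the paper: the paper's proof is precisely the observation that, after rewriting $G_s$ via $1-|\langle x,y\rangle|^2=\rho^2/2$, the resulting profile functions (constant multiples/shifts of $\log(1/t)$ and $1/t^s$) are strictly decreasing and strictly convex on $(0,\sqrt{2}]$, so Theorem~\ref{thm:circleP} gives part (i) and Theorem~\ref{thm:eat} gives part (ii). Your explicit derivative checks and bookkeeping of the constants $2^{s/2}$ and $\log 2$ simply spell out what the paper leaves as a one-line remark.
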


\begin{remark}
The conclusions of Corollary \ref{cor:r2} hold for $s=\infty$ (best line-packing problem). These results were mentioned in \cite{SH03} and are also implied by Theorem \ref{thm:sinfP}.
\end{remark}


In particular, the optimal configuration of $N=d+1$ points that solves \eqref{equ:projective} is given by the vertices of a regular $d$-simplex because it is an ETF.  When $d=3$, the results for ETF are well known for small values of $N$. We summarize these results in Table \ref{tab:pr}, where we also compare the optimal configurations of the projective Riesz kernel and the classical Riesz kernel.
They only share the same optimal configuration for the $N=d+1$ case. Moreover, ETFs are optimal configurations for the projective kernel while nothing is known for the Riesz kernel in general. 

We have explained intuitively why the projective Riesz kernels are better at promoting well-separated frames than the Riesz kernel. This is reflected in Table \ref{tab:pr}. For the first case $\Sb^1$ when $N=4$, the optimal configuration for the projective Riesz kernel is two orthonormal bases with a 45 degree angle, which is a well-separated tight frame, while the optimal configuration for the Riesz kernel consists of 4 equally spaced points on $\Sb^1$. For the third row, an orthonormal basis is the optimal frame whereas 3 points on one great circle is not even a frame.  For 6 points on a sphere, the Riesz optimal configuration is again two copies of the same orthonormal bases. More numerical support can be found in Section \ref{sec:num}.

\begin{table}[htb]
\caption{Optimal configuration comparison on $\Sbd\subset\R^d$} \label{tab:pr}
\centering 
\small
\begin{tabular}{l|c|c|c}
           & proj. Riesz kernel  & Best line-packing & Riesz kernel \\
           &$s\in[0,\infty)$ &$s=\infty$ & $s\in[0,\infty)$\\
           \hline
           \hline
$\Sb^1, \text{any } N$ & \multicolumn{2}{c|}{equally spaced points on half circle} & equally spaced points on $\Sb^1$\\
\hline
$\Sb^2, N=2$& \multicolumn{2}{c|}{two orthogonal points} & two antipodal points\\
\hline
\multirow{2}{*}{$\Sb^2, N=3$}& \multicolumn{2}{c|}{\multirow{2}{*}{any orthonormal basis}} & vertices of an equilateral  \\ 
            & \multicolumn{2}{c|}{} & triangle on a great circle\\
            \hline
$\Sb^2, N=4$& \multicolumn{3}{c}{vertices of a regular tetrahedron (simplex)} \\
\hline
\multirow{2}{*}{$\Sb^2, N=5$}& \multirow{2}{*}{open, see Table \ref{tab:5}} &removing any vector from& \multirow{2}{*}{partially solved in \cite{S16}}\\
&&the $3\times6$ ETF, see \cite{CHS96, BD06} &\\
\hline
$\Sb^2, N=6$& \multicolumn{2}{c|}{$3\times6$ ETF, or vertices of the icosahedron}  & octahedral vertices\\
\hline
$\Sb^{d-1},N=d+1$& \multicolumn{3}{c}{vertices of the simplex}\\
\hline
$\Sb^{d-1},N$ & \multicolumn{2}{c|}{$d\times N$ ETF when exists} & open
\end{tabular}
\end{table}

The Grassmannian frame consisting of 5 vectors is constructed by removing an
arbitrary element of the optimal Grassmannian frame consisting of 6 vectors (ETF). The coherence of the Grassmannian frame (in both $N=5$ and $N=6$) is $1/\sqrt{5}$.   We refer
to \cite{CHS96} for more details.

It is not possible for 5 points to be an ETF in $\R^3$, and the optimal configuration of \eqref{equ:projective} remains open to the best knowledge of the authors. The numerical experiments in Table~\ref{tab:5} indicate that optimal configurations have exactly  2 distinct inner products. These   inner products  depend  on the value $s$.  As $s\to \infty$, Theorem \ref{thm:sinfP} below implies that the inner products converge to $1/\sqrt{5}$.  

\begin{table}[htb]
\caption{Optimal configurations for projective Riesz kernel and best line packing when $N=5, d=3$.}\label{tab:5}
\begin{tabular}{c|ccccc}
&$s=2$ & $s=10$ & $s=15$ & $s=\infty$\\
\hline
$\{|\langle x_i,x_j\rangle|: i\neq j\}$& $\{0.293, 0.506\}$&$\{0.366, 0.478\}$&$\{0.389, 0.471\}$&$\{1/\sqrt{5}\approx0.447\}$\\
\end{tabular}

\end{table}

We conjecture that if $X^*=\{x_1,x_2,x_3,x_4,x_5\}$ is an optimal configuration of \eqref{equ:projective} for $s\in[0,\infty)$ and $N=5$, then the cardinality of the set $\{|\langle x_i,x_j\rangle|,i\neq j\}$ is 2. We remark that constructions of biangular tight frames are studied in \cite{CCHT17}.

\section{Optimal configurations as frames}\label{sec:goodframe}

We show in this section that frames rising from \eqref{equ:projective} are  well-separated  and nearly tight asymptotically. 
Since the frame vectors will always be on the sphere, it is understood that $\Ec_{G_s}(N)$ refers to $\Ec_{G_s}(\Sbd, N)$. 


The following theorem is the analog of Theorem \ref{thm:sinf} for the projective Riesz $s$-kernel. It can be over the real or complex field.
\begin{theorem}\label{thm:sinfP}
The best line-packing problem is the limit of problem \eqref{equ:projective}   as $s\to \infty$:
$$\lim_{s\rightarrow\infty}\Ec_{G_s}(N)^{1/s}=\sqrt{\frac{1}{1-\xi_N}}.$$
If, for $s>0$,   $X_s$ is an optimal configuration achieving $\Ec_{G_s}(N)$, then every cluster point as $s\rightarrow\infty$ of the set $\{X_s\}_{s>0}$ is a Grassmannian frame.
\end{theorem}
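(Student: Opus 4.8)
The plan is to transport everything to the projective model $\Dc\subset\Hb\Mb_{d\times d}^h$, where the statement reduces to the classical Riesz best-packing limit of Theorem \ref{thm:sinf}. The two levers are the reduction \eqref{equ:red}, which for $s>0$ reads $G_s(x,y)=2^{s/2}R_s(p_x,p_y)$, and the packing identity \eqref{equ:2}, namely $\delta_N^2(\Dc)=2-2\xi_N$.

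First I would record the exact relation between the two minimal energies. Any configuration $X_N=\{x_1,\dots,x_N\}\subset\Sbd$ with pairwise-distinct projective classes — a property every minimizer enjoys, since coincident lines force infinite $G_s$-energy — satisfies, upon summing \eqref{equ:red} over $i\neq j$,
$$E_{G_s}(X_N)=2^{s/2}\,E_{R_s}(\Phi(X_N)),$$
where $\Phi(X_N)=\{p_{x_1},\dots,p_{x_N}\}$ is an $N$-point subset of $\Dc$. Conversely, every $N$-point set $P\subset\Dc$ lifts to such an $X_N$ with $\Phi(X_N)=P$ and the identical relation. Taking infima over the two configuration spaces therefore yields $\Ec_{G_s}(N)=2^{s/2}\,\Ec_{R_s}(\Dc,N)$.

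Next I would raise this identity to the power $1/s$ and let $s\to\infty$. Since $(2^{s/2})^{1/s}=\sqrt{2}$ for every $s>0$, Theorem \ref{thm:sinf} applied with $A=\Dc$ (an infinite compact set of cardinality at least $N$) gives
$$\lim_{s\to\infty}\Ec_{G_s}(N)^{1/s}=\sqrt{2}\,\lim_{s\to\infty}\Ec_{R_s}(\Dc,N)^{1/s}=\frac{\sqrt{2}}{\delta_N(\Dc)}.$$
Substituting $\delta_N(\Dc)=\sqrt{2-2\xi_N}=\sqrt{2}\,\sqrt{1-\xi_N}$ from \eqref{equ:2} cancels the factor $\sqrt{2}$ and produces exactly $\sqrt{1/(1-\xi_N)}$, establishing the limit.

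For the cluster-point claim, let $X_s$ be $G_s$-optimal. By the energy identity $\Phi(X_s)$ is $R_s$-optimal on $\Dc$, so the second half of Theorem \ref{thm:sinf} shows that every cluster point of $\{\Phi(X_s)\}$ is an $N$-point best-packing configuration on $\Dc$. Compactness of $\Sbd$ guarantees that a cluster point $X_*$ of $\{X_s\}$ exists, and the continuity of $\Phi$ carries it to a cluster point $\Phi(X_*)$ of $\{\Phi(X_s)\}$; hence $\Phi(X_*)$ is best-packing, and by \eqref{equ:coh} this forces $\xi(X_*)=\xi_N$, i.e. $X_*$ is a Grassmannian frame. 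The main obstacle — really a matter of careful bookkeeping rather than a new idea — is ensuring that the many-to-one map $\Phi$ together with the projective equivalence \eqref{equ:equiv} neither creates nor destroys configurations when passing between $\Sc(d,N)$ and the $N$-point subsets of $\Dc$, and that cluster points transfer faithfully through $\Phi$. Both points are settled by the observations that minimizers have distinct projective classes and that $\Phi$ is continuous with image exactly $\Dc$.
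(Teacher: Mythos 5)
Your proof is correct and takes essentially the same approach as the paper: both reduce to the Riesz $s$-energy on $\Dc$ via \eqref{equ:red}, apply Theorem \ref{thm:sinf} with $A=\Dc$, and translate back through \eqref{equ:2} and \eqref{equ:coh}. Your extra bookkeeping (minimizers having distinct projective classes, and cluster points transferring through the continuous map $\Phi$) merely makes explicit what the paper's shorter proof leaves implicit.
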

\begin{proof}
By \eqref{equ:red}
$$\Ec_{G_s}(N)=2^{s/2}\Ec_{R_s}(\Dc,N)$$
Taking the $s$th root and letting $s\rightarrow\infty$, we have 
$$\lim_{s\rightarrow\infty}\Ec_{G_s}(N)^{1/s}=\lim_{s\rightarrow\infty}\sqrt{2}\Ec_{R_s}(\Dc,N)^{1/s}=\frac{\sqrt{2}}{\delta_N(\Dc)}=\sqrt{\frac{1}{1-\xi_N}}.$$
The last two equalities are from Theorem \ref{thm:sinf} and \eqref{equ:2},

The second assertion is also a consequence of Theorem \ref{thm:sinf} since  for any $G_s$-optimal configuration  $\{x_i\}\subset \Sbd$, the configuration $\{p_i=x_ix_i^*\}$  is  $R_s$-optimal   for $ \Dc$.
\end{proof}

%

The Grassmannian frames have the best separation by definition, but Theorem \ref{thm:sinfP} suggests that we are also able to find well-separated frames by solving \eqref{equ:projective} for large values of $s$. 
We will further show that projective Riesz energy minimizing frames are well-separated in the sense that their coherence have  optimal   order asymptotic growth (Theorem \ref{thm:sepp}).


Let $B(x,r)\subset\R^m$ be the   ball centered at $x$ with radius $r$. For a number $\alpha>0$ and a positive Borel measure $\mu$ supported on $A\subset\R^m$, we say that $\mu$ is {\em upper $\alpha$-regular} if there is some finite constant $C_A$ such that
\begin{equation}\label{equ:D}
\mu(B(x,r))\leq C_Ar^{\alpha}\qquad \text{ for all } x\in A, \, 0<r\leq \text{diam}(A),
\end{equation}
and similarly that $\mu$ is {\em lower  $\alpha$-regular} if there is some positive constant $c_A$ such that
\begin{equation}\label{equ:c}
\mu(B(x,r))\geq c_Ar^{\alpha}\qquad \text{ for all } x\in A,\, 0<r\leq \text{diam}(A).
\end{equation}
 It is not difficult to verify that  $\Phi(\sd)$, the uniform measure on $\Dc$, is both upper and lower $(\dim\Dc)$-regular (see the Appendix).  

We recall that $\delta_N(\Dc)$ is the maximum of the separation distance among all possible $N$ point configurations on $\Dc$. Since $\Phi(\sd)$ is lower $(\dim \Dc)$-regular, there is a constant $C<\infty$ such that
$\delta_N(\Dc)\leq CN^{-\frac{1}{\dim \Dc}}$ for all $N$,  
which follows immediately   by observing that  the $\Phi(\sd)$ measure of an arbitrary packing   in $\Dc$   is no more than $\Phi(\sd)(\Dc)=1$.
 Expressing this bound in terms of 
coherence  (recall \eqref{equ:2}) gives
\begin{equation}\label{equ:optimal}
\xi_N\geq 1-\frac{C^2}{2}N^{-\frac{2}{\dim \Dc}}.
\end{equation}
The above bounds are   attained by any sequence of best-packing configurations on $\Dc$ (e.g., see \cite[Chapter~13]{BHS20}).

Based on the above observation, we say that  a sequence $(X_N)$ of $N$-point configurations in $\Dc$ is   {\em well-separated} if there is some constant $\widetilde C>0$ such that $\delta(X_N)\ge \widetilde C N^{-\frac{1}{\dim \Dc}}$ for all $N$.
  Equivalently, in terms of coherence, $\left(X_N \right)$ is  well-separated if   
\begin{equation}
\xi(X_N)\leq 1-\frac{\widetilde C}{2}N^{-\frac{2}{\dim \Dc}},
\end{equation}
for all $N$.

We will show that the projective $s$-Riesz energy minimizing points are  well-separated  when $s>\dim \Dc$. This is a consequence of the 
following known theorem  for  optimal configurations  on  more general sets. 

\begin{theorem}[{\cite[Corollary 2]{HSW12}}] \label{thm:sepr} Suppose $A\subset\R^m$ is compact  and supports an upper $\alpha$-regular   measure $\mu$ as in \eqref{equ:D}. Let $s>\alpha, N\geq 2$ be fixed. If $X_N^*$ is an $N$-point minimizing configuration on $A$ for the $s$-Riesz energy minimizing problem \eqref{equ:riesz}, then
\begin{equation}
\delta(X_N^*)\geq C_1 N^{-\frac{1}{\alpha}},
\end{equation}
where $C_1=\left(\frac{\mu(A)}{C_A}(1-\frac{\alpha}{s})\right)^{1/\alpha}\left(\frac{\alpha}{s}\right)^{\frac{1}{s}}$.
\end{theorem}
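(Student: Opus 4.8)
The plan is to combine the variational (``local optimality'') characterization of energy minimizers with the upper regularity of $\mu$ to control the Riesz potential generated by the remaining $N-1$ points. Write $X_N^*=\{x_1,\dots,x_N\}$ and, for each index $k$, let $U^{(k)}(y):=\sum_{j\neq k}|y-x_j|^{-s}$ be the $R_s$-potential at $y$ of the configuration with $x_k$ deleted. Since relocating a single point of a global minimizer cannot lower the total energy, and $E_{R_s}(X)=2U^{(k)}(y)+\text{const}$ when only $x_k=y$ is moved, each $x_k$ must minimize $U^{(k)}$ over all of $A$; that is, $U^{(k)}(x_k)=\min_{y\in A}U^{(k)}(y)$. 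First I would fix a pair $x_1,x_2$ realizing the separation distance, $|x_1-x_2|=\delta(X_N^*)=:\delta$, so that trivially $U^{(2)}(x_2)\ge |x_2-x_1|^{-s}=\delta^{-s}$. The whole problem thus reduces to producing a good \emph{upper} bound for $\min_{y\in A}U^{(2)}(y)$.

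To bound the minimum from above I would compare it with an average of $U^{(2)}$ over a suitable subset of $A$, since the minimum is at most any such average. The subtlety is that $\int_A|y-x_j|^{-s}\,d\mu(y)=\infty$ whenever $s\ge\alpha$ (precisely the capacity-zero regime), so averaging over all of $A$ is useless. The remedy is to excise small balls: for a parameter $a>0$ set $S_a:=A\setminus\bigcup_{j\neq2}B(x_j,a)$, so that $|y-x_j|\ge a$ for every $y\in S_a$. The key estimate, obtained from the layer-cake (distribution-function) formula together with the upper $\alpha$-regularity $\mu(B(x_j,r))\le C_Ar^\alpha$, is
\[
\int_{A\setminus B(x_j,a)}|y-x_j|^{-s}\,d\mu(y)\le \frac{C_A}{1-\alpha/s}\,a^{\alpha-s},
\]
whose integral converges precisely because $s>\alpha$. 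Summing over the $N-1$ indices gives $\int_{S_a}U^{(2)}\,d\mu\le (N-1)\frac{C_A}{1-\alpha/s}a^{\alpha-s}$, while a union bound and upper regularity give $\mu(S_a)\ge \mu(A)-(N-1)C_Aa^\alpha$. Combining these in $\delta^{-s}\le U^{(2)}(x_2)\le \mu(S_a)^{-1}\int_{S_a}U^{(2)}\,d\mu$ yields, for every admissible $a$,
\[
\delta^{-s}\le \frac{(N-1)C_A\,a^{\alpha-s}}{(1-\alpha/s)\bigl(\mu(A)-(N-1)C_Aa^\alpha\bigr)}.
\]

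Finally I would optimize the free radius $a$. Minimizing the right-hand side is a one-variable calculus problem whose stationarity condition reduces to $(N-1)C_Aa^\alpha=\mu(A)(1-\alpha/s)$; substituting this value collapses the bound to $\delta^{-s}\le \frac{s}{\alpha}\bigl(\frac{(N-1)C_A}{\mu(A)(1-\alpha/s)}\bigr)^{s/\alpha}$, which rearranges exactly to $\delta\ge C_1(N-1)^{-1/\alpha}\ge C_1 N^{-1/\alpha}$ with the stated constant $C_1$. I expect the main obstacle to be handling the divergence of the Riesz potential for $s\ge\alpha$ cleanly, i.e.\ the excision argument and verifying that the truncated-integral estimate holds for all levels in the layer-cake computation (the radius $t^{-1/s}$ may exceed $\mathrm{diam}(A)$, where one replaces $C_Ar^\alpha$ by $\mu(A)$). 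A secondary point needing care is checking that the optimal radius $a$ is admissible, namely $a\le\mathrm{diam}(A)$ and $\mu(S_a)>0$; both follow from $\mu(A)\le C_A(\mathrm{diam}\,A)^\alpha$ and $(1-\alpha/s)<1$.
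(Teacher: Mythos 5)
Your proof is correct: the localization step (each $x_k$ minimizes the potential of the remaining points), the excision of balls of radius $a$, the layer-cake estimate using upper $\alpha$-regularity, and the optimization $(N-1)C_Aa^\alpha=\mu(A)(1-\alpha/s)$ all check out and reproduce exactly the stated constant $C_1$ (indeed with the slightly stronger factor $(N-1)^{-1/\alpha}$). The paper itself gives no proof of this statement --- it is quoted from \cite[Corollary 2]{HSW12} --- and your argument is essentially the standard excision-and-averaging proof used in that reference, so there is nothing further to reconcile.
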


We will next apply Theorem~\ref{thm:sepr} to obtain the following bound on the coherence of optimal $G_s$ configurations.  
\begin{theorem}[Separation]\label{thm:sepp}
Let $s>\dim \Dc$.  If  $X_s$ is an $N$-point minimizing configuration of \eqref{equ:projective}, then
\begin{equation}\label{equ:sepp}
\xi(X_s)\leq 1-\frac{C_2^2}{2}N^{-{2}/{\dim\Dc}},
\end{equation}
where the constant $C_2$ is independent of $N$  and, in the case $\Hb=\R$, can be found in \eqref{equ:C2}.  Consequently, any sequence of such configurations is 
well-separated as $N\to \infty$.
\end{theorem}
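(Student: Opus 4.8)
The plan is to transfer a separation estimate from the classical Riesz setting on the embedded manifold $\Dc$ back to a coherence bound on $\Sbd$, using the identification established in Section~\ref{sec:embed} together with the known result Theorem~\ref{thm:sepr}.

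First I would invoke the reduction \eqref{equ:red}: for $s>0$ one has $G_s(x,y)=2^{s/2}R_s(p_x,p_y)$, so the $G_s$-energy on $\Sbd$ and the $R_s$-energy on $\Dc$ differ only by the positive factor $2^{s/2}$. Consequently, exactly as already observed in the proof of Theorem~\ref{thm:contP}(3), a configuration $X_s=\{x_i\}_{i=1}^N$ solves the projective problem \eqref{equ:projective} if and only if its image $\Phi(X_s)=\{p_{x_i}\}_{i=1}^N$ is an $R_s$-optimal $N$-point configuration on $\Dc$ for \eqref{equ:riesz}.

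Next I would set $\alpha:=\dim\Dc$ and take $\mu:=\Phi(\sd)$, the uniform measure on $\Dc$. As recorded in the text (with the verification placed in the Appendix), $\mu$ is upper $\alpha$-regular in the sense of \eqref{equ:D} for some finite constant $C_A$, and $\mu(\Dc)=1$. Since by hypothesis $s>\dim\Dc=\alpha$ and $N\ge 2$, Theorem~\ref{thm:sepr} applies to the $R_s$-optimal configuration $\Phi(X_s)$ and yields
\[
\delta(\Phi(X_s))\ge C_1 N^{-1/\alpha},\qquad C_1=\left(\frac{\mu(\Dc)}{C_A}\Bigl(1-\frac{\alpha}{s}\Bigr)\right)^{1/\alpha}\left(\frac{\alpha}{s}\right)^{1/s}.
\]
Squaring this and applying the identity \eqref{equ:coh}, namely $\delta^2(\Phi(X_s))=2-2\xi(X_s)$, gives $2-2\xi(X_s)\ge C_1^2 N^{-2/\alpha}$, whence
\[
\xi(X_s)\le 1-\frac{C_1^2}{2}N^{-2/\dim\Dc}.
\]
This is precisely \eqref{equ:sepp} with $C_2=C_1$, a constant independent of $N$; the assertion that any such sequence is well-separated as $N\to\infty$ is then immediate from the definition of well-separated given just before the theorem.

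The only genuine work, which I would defer to the Appendix, is producing the explicit upper-regularity constant $C_A$ for $\Phi(\sd)$ on the embedded projective space $\Dc$, since this is what determines the explicit value of $C_2$ claimed in \eqref{equ:C2} in the real case $\Hb=\R$. This amounts to a local volume computation showing that the $\Phi(\sd)$-measure of a Frobenius ball $B(p,r)\cap\Dc$ is bounded above by $C_A r^{\alpha}$ uniformly in the center $p$; everything else in the argument is bookkeeping of constants and two invocations of previously established theorems.
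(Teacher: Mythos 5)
Your proposal is correct and follows essentially the same route as the paper: identify $G_s$-minimizers on $\Sbd$ with $R_s$-minimizers on $\Dc$ via \eqref{equ:red}, apply Theorem~\ref{thm:sepr} with $A=\Dc$ and $\mu=\Phi(\sd)$ (upper $(\dim\Dc)$-regular, $\mu(\Dc)=1$), and convert the separation bound to a coherence bound via \eqref{equ:coh}, with the explicit regularity constant deferred to the Appendix. Your $C_1$ is exactly the paper's $C_2$.
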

\begin{proof}
By \eqref{equ:red}, if $X_s=\{x_i\}_{i=1}^N$ is an optimal configuration of \eqref{equ:projective}, then $P_s=\{x_ix_i^T\}_{i=1}^N$ is an optimal configuration of \eqref{equ:rieszC}. Appealing to Theorem \ref{thm:sepr} with $A=\Dc$, the projective space embedded in $\Hb\Mb_{d\times d}^h$, and recalling that $\Phi(\sd)$ is  upper   $(\dim\Dc)$-regular with constant $C_{\Dc}>0$, we have 
\begin{equation}\label{equ:thmsep}
\delta(P_s)\geq C_2 N^{-\frac{1}{\dim \Dc}},
\end{equation}
where 
$C_2:=\left(\frac{1}{C_\Dc}(1-\frac{\dim \Dc}{s})\right)^{\frac{1}{\dim \Dc}}\left(\frac{\dim \Dc}{s}\right)^{\frac{1}{s}}$.
The inequality \eqref{equ:sepp} then follows from \eqref{equ:coh}.  

In the case $\Hb=\R$, as shown in the Appendix  (see \eqref{equ:mupAp}), $$C_\Dc=\frac{2}{d-1}\gamma_d=\frac{2\Gamma(\frac{d}{2})}{(d-1)\Gamma(\frac{d-2}{2})\Gamma(1/2)},$$  where $\Gamma(\cdot)$ is the Gamma function. Therefore
\begin{equation}\label{equ:C2}
C_2=\frac{(d-1)(s-d+1)\Gamma(\frac{d-2}{2})\Gamma(1/2)}{2s\Gamma(\frac{d}{2})}\left(\frac{d-1}{s}\right)^{\frac{1}{s}}.
\end{equation}
\end{proof}

It is worth  noting in the case $\Hb=\R$ that the expected coherence of    an i.i.d. random   frame $X\in \Sc(d,N)$  generated
from the uniform distribution on the sphere  satisfies $\E[\xi(X)]\approx 1-C_dN^{-\frac{4}{d-1}}$ (see Appendix), which is significantly worse than optimal. This fact is also demonstrated numerically in  Section \ref{sec:num} (see Figure \ref{fig:640}).
%

\color{black}
We next show that the optimal configurations of \eqref{equ:projective} are nearly tight. 

\begin{theorem}[Nearly tight]\label{thm:tight}
Let $s\ge 0$. If   $X_N=\left \{x_1,\ldots,x_N\right\}\in \Sc(d,N)$ is  any optimal configuration for \eqref{equ:projective} for $N\ge 2$, then (treated as a matrix)
\begin{equation}\label{equ:asytight}
\lim_{N\rightarrow\infty}\frac{1}{N}X_NX_N^*=\frac{1}{d}I_d.\end{equation}
\end{theorem}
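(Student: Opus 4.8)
The plan is to reduce the matrix limit \eqref{equ:asytight} to the weak* convergence already supplied by Theorem \ref{thm:contP}(3), followed by a short symmetry computation of the barycenter of the uniform measure on $\Dc$. Fix $s\ge 0$. The key starting observation is that, writing $p_{x_i}=x_ix_i^*\in\Dc$, the columns of $X_N$ give
$$\frac{1}{N}X_NX_N^*=\frac{1}{N}\sum_{i=1}^N x_ix_i^*=\frac{1}{N}\sum_{i=1}^N p_{x_i}=\int_{\Dc}p\ d\nu(\Phi(X_N))(p),$$
so that $\frac{1}{N}X_NX_N^*$ is exactly the barycenter (expected value) of the normalized counting measure $\nu(\Phi(X_N))$ on the compact set $\Dc\subset\Hb\Mb_{d\times d}^h$.

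First I would observe that, under the identification of $\Hb\Mb_{d\times d}^h$ with $\R^m$, each entry of the matrix-valued integrand $p\mapsto p$ is a bounded continuous coordinate function on $\Dc$. Since Theorem \ref{thm:contP}(3) gives that $\nu(\Phi(X_N))\to\Phi(\sd)$ weak* as $N\to\infty$, applying the definition of weak* convergence separately to each of these finitely many coordinate functions yields
$$\lim_{N\to\infty}\frac{1}{N}X_NX_N^*=\int_{\Dc}p\ d\Phi(\sd)(p).$$

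Next I would evaluate the right-hand side. By the definition \eqref{equ:mup} of the pushforward measure $\Phi(\sd)$,
$$\int_{\Dc}p\ d\Phi(\sd)(p)=\int_{\Sbd}xx^*\ d\sd(x)=:M.$$
The matrix $M$ is then identified by symmetry: since $\sd$ is invariant under the orthogonal (resp.\ unitary) group, one has $UMU^*=\int_{\Sbd}(Ux)(Ux)^*\ d\sd(x)=M$ for every such $U$, so by Schur's lemma $M=cI_d$ for a scalar $c$; taking traces gives $c\,d=\trace(M)=\int_{\Sbd}\|x\|^2\ d\sd(x)=1$, whence $M=\frac{1}{d}I_d$. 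Combining this with the previous display establishes \eqref{equ:asytight}.

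I do not expect a genuine obstacle here; the one point requiring care is the transition from the scalar weak* statement of Theorem \ref{thm:contP}(3) to convergence of the matrix-valued integral. This is legitimate precisely because matrix convergence is entrywise convergence and the matrix entries are continuous functions on the compact manifold $\Dc$, to each of which weak* convergence applies. The remainder is the standard invariance-plus-trace computation for the second moment of the uniform measure on the sphere.
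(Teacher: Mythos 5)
Your proof is correct and takes essentially the same approach as the paper's: both reduce \eqref{equ:asytight} to the weak* convergence $\nu(\Phi(X_N))\to\Phi(\sd)$ from Theorem \ref{thm:contP}(3), applied entrywise to the (continuous) coordinate functions of the identity map on $\Dc$, and then use the pushforward change of variables to land on the second moment $\int_{\Sbd}xx^*\,d\sd$. The only cosmetic difference is how that moment is evaluated --- you invoke unitary invariance, Schur's lemma, and a trace, whereas the paper integrates each entry directly by coordinate symmetry --- and both computations are immediate.
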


\begin{proof}
  Theorem \ref{thm:contP}(3) states that $\nu(\Phi(X_N))=\frac{1}{N}\sum_{i=1}^N\delta_{\Phi(x_i)}$  converges weak* to $\Phi(\sd)$. Thus for every continuous function $f$ defined on $\Dc$, 
$$\lim_{N\rightarrow\infty}\int_\Dc fd\nu(\Phi(X_N))=\int_\Dc fd\Phi(\sd).$$
 By the definition of a pushforward measure, this can be simplified to
 $$\lim_{N\rightarrow\infty}\int_{\Sb^{d-1}} f\circ\Phi \ d\nu(X_N)=\int_{\Sb^{d-1}} f\circ\Phi \ d\sd;$$
 that is,
\begin{equation}\label{equ}
\lim_{N\rightarrow\infty}\frac{1}{N}\sum_{i=1}^Nf(\Phi(x_i))=\int_{\Sb^{d-1}} f(\Phi(x))d\sd.
\end{equation}
Let $f(\Phi(x))=xx^*$ be a vector-valued function. Then \eqref{equ} implies
\begin{equation}\label{equ:xxt}
\lim_{N\rightarrow\infty}\frac{1}{N}\sum_{i=1}^Nx_ix_i^*=\int_{\Sb^{d-1}} xx^*d\sd.
\end{equation}
We need to integrate every entry of the right-hand side. Let $x=(x(1), x(2), \ldots, x(d))^T\in\Sbd$, so that $xx^*=(x(i)\overline{x(j)})$.  Then,
\begin{align*}
\text{if }i= j,& \quad\int |x(j)|^2d\sd=\frac{1}{d}\int(|x(1)|^2+\cdots+|x(d)|^2)d\sd=\frac{1}{d};\\
\text{if }i\neq j, &\quad\int x(i)\overline{x(j)}d\sd=0 \text{ by symmetry.}
\end{align*}
 Since $X_NX_N^*=\frac{1}{N}\sum_{i=1}^Nx_ix_i^*$, from \eqref{equ:xxt} we deduce that

$$\lim_{N\rightarrow\infty}\frac{1}{N}X_NX_N^*=\frac{1}{d}I_d.$$
\end{proof}

\color{black}

Theorem \ref{thm:tight} says that the optimal configurations are nearly tight asymptotically as $N\rightarrow\infty$ in relation to \eqref{equ:tight}. More desirable would be a stronger result of the form
\begin{equation}\label{equ:asytight2}
\left\|X_NX_N^*-\frac{N}{d}I_d\right\|_F=\mathcal{O}(N^{-q}), \quad q>0.
\end{equation}
 The numerical experiments in Section \ref{sec:num} (left side of Figure \ref{fig:tight}) do indeed suggest a result like \eqref{equ:asytight2} holds at least for small values of $s$. 
The work \cite{BSSW} provides a partial explanation for this phenomenon. It studies the convergence rate of \eqref{equ} for  $f$ in a Sobolev space (which is the case for every entry of $xx^*$). The numerical experiments therein suggests that the $s$-Riesz minimizing configurations (when $s=0, 1$) achieve the optimal order quasi Monte Carlo error bounds.

Regarding random tight frames, it is shown in \cite[Corollary 3.21]{E12} that $$\E\left(\left\|X_NX_N^*-\frac{N}{d}I_d\right\|_F^2\right)=N(1-\frac{1}{d}),$$ which grows as $N$ grows. Section \ref{sec:num} (right of Figure \ref{fig:tight}) shows that optimal configurations of \eqref{equ:projective} also outperforms random configurations on tightness.

\color{black}
\section{numerical experiments}\label{sec:num}
The numerical experiments conducted consider points in the real vector space, and were executed in Matlab. When solving  \eqref{equ:projective} (or \eqref{equ:projective:negative} for negative $s$), spherical coordinates are used so that the command \verb"fminunc" (unconstrained minimization) can be employed. Four experiments were performed. 

The first and the second experiments deal with the separation and tightness of the optimal configurations of \eqref{equ:projective} or \eqref{equ:projective:negative}, and are explained in Sections \ref{sec:num:sep} and \ref{sec:num:tight}. 
Since the objective function has lots of local minima, in both experiments, we run \verb"fminunc" with multiple random initializations to obtain a putative minimum. We then test whether the optimal configurations are nearly tight or have small coherence (well-separatedness). 

The third experiment presents an algorithm for obtaining tight frames with good separation. As explained in Section \ref{sec:num:both}, it is crucial to use a well-separated frame as an initialization.

\subsection{Good separation}\label{sec:num:sep}
The first experiment explores the asymptotic behavior of the coherence $\xi(X)=\max_{i\neq j}|\langle x_i, x_j\rangle|$ of projective Riesz minimizing points for various values of $s$ as $N$ gets larger. The result  displayed in Figure~\ref{fig:sep} is for $d=3$ with points   on $\Sb^2$. The number of points $N$  ranges from 3 to 100. The separation result Theorem \ref{thm:sepp} only applies to $s>2$, but our numerical experiment shows that the log case and $s=1$ case are achieving smaller coherence. The $s=-2$ (frame potential) case has the worst behavior as its minimizers could contain repeated (or antipodal) points. Notice that the coherence gets smaller as $s$ increases which is consistent with Theorem \ref{thm:sinfP}. Finally, the coherence curve was fit with $y=1- {3}/{N}$, which reflects Theorem \ref{thm:sepp}.
\begin{figure}[htb]
\includegraphics[width=.9\textwidth]{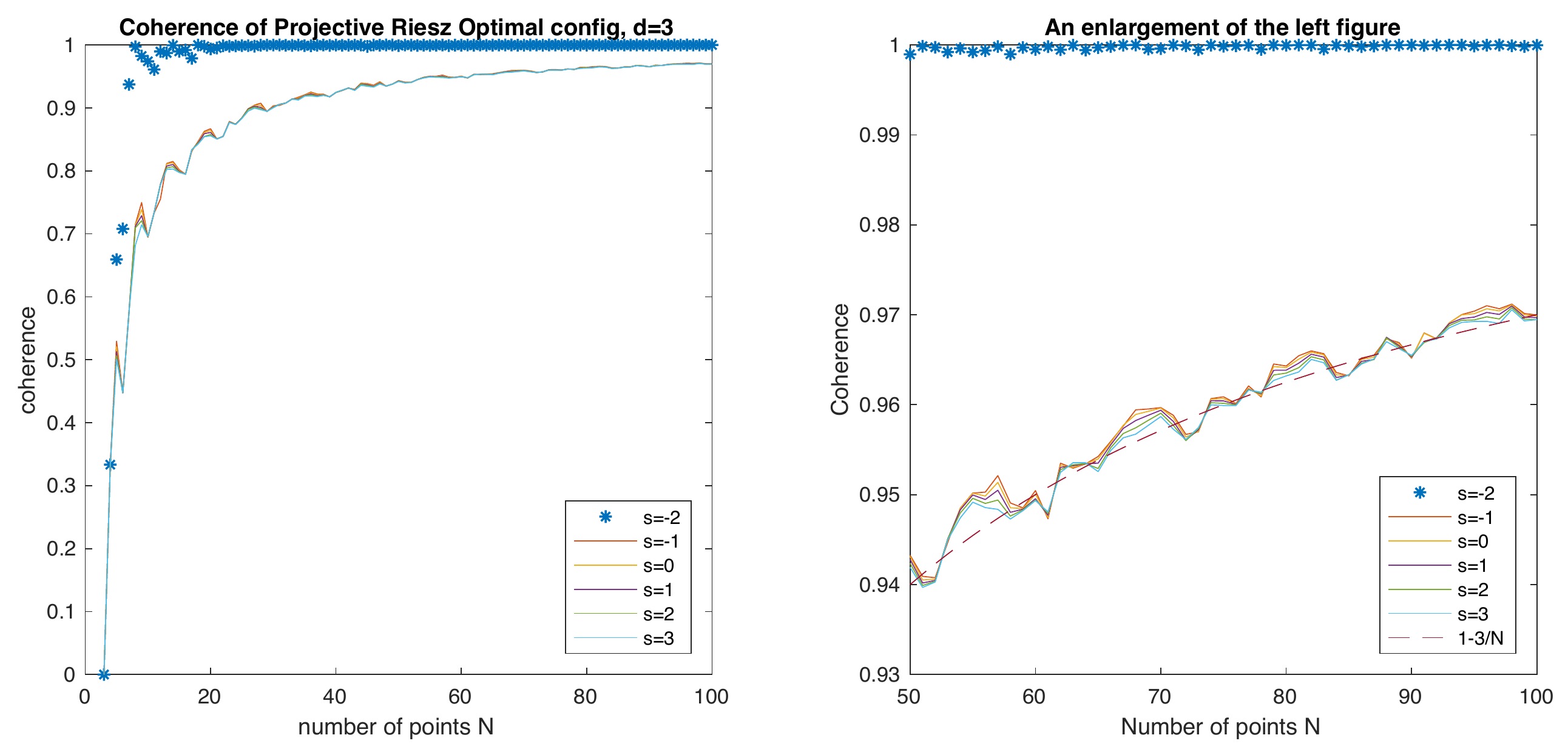}
\caption{}\label{fig:sep}
\end{figure}

The second experiment computed the coherence of the projective Riesz minimizing points for $d=6$ and relatively small values of $N$ (from 6 to 40), as shown in Figure \ref{fig:640}. Various $s$ are computed and compared with the Welch bound \eqref{equ:welch}, 
the Levenstein bound~\cite{L92,ZDL14}
$$\xi(X)\geq\sqrt{\frac{3N-d^2-2d}{(d+2)(N-d)}},\quad\text{if }N>d(d+1)/2,$$
and the Sloane database http://neilsloane.com/grass/. The Sloane database has the best known line-packings or the smallest coherence given $d, N$, among which some are only putatively known. Figure \ref{fig:640} also includes uniform random configurations. For each $N$, we display the coherence that is  averaged over 20 samples. 
We again observe that larger $s$ produces better separated frames, and $s=-2$ (frame potential case)  produces highly correlated frames.
For all values of $s$ except for -2, \eqref{equ:projective} achieves the Welch bound when $N=6, 7,16$ (these are all the ETFs and thus universally optimal), and it achieves the Levenstein bound when $N=36$.
The 36 point configuration in $\R^6$ is the 6-dimensional lattice $E_6$~\cite{FJM17} and is also known to be universally optimal~\cite{CKM16} although not an ETF.  We further remark that our numerical experiments  suggest that the Sloane grassmannian  configurations for $d=6$ and $N=12$ and $N=22$ may be universally optimal.  This might also be anticipated from    Figures~\ref{fig:640} and \ref{fig:tight6}.  These figures might also suggest the universal optimality of the Sloane  configuration for $N=21$, however this turns out not to be the case.
\begin{figure}[htb]
\includegraphics[width=.9\textwidth]{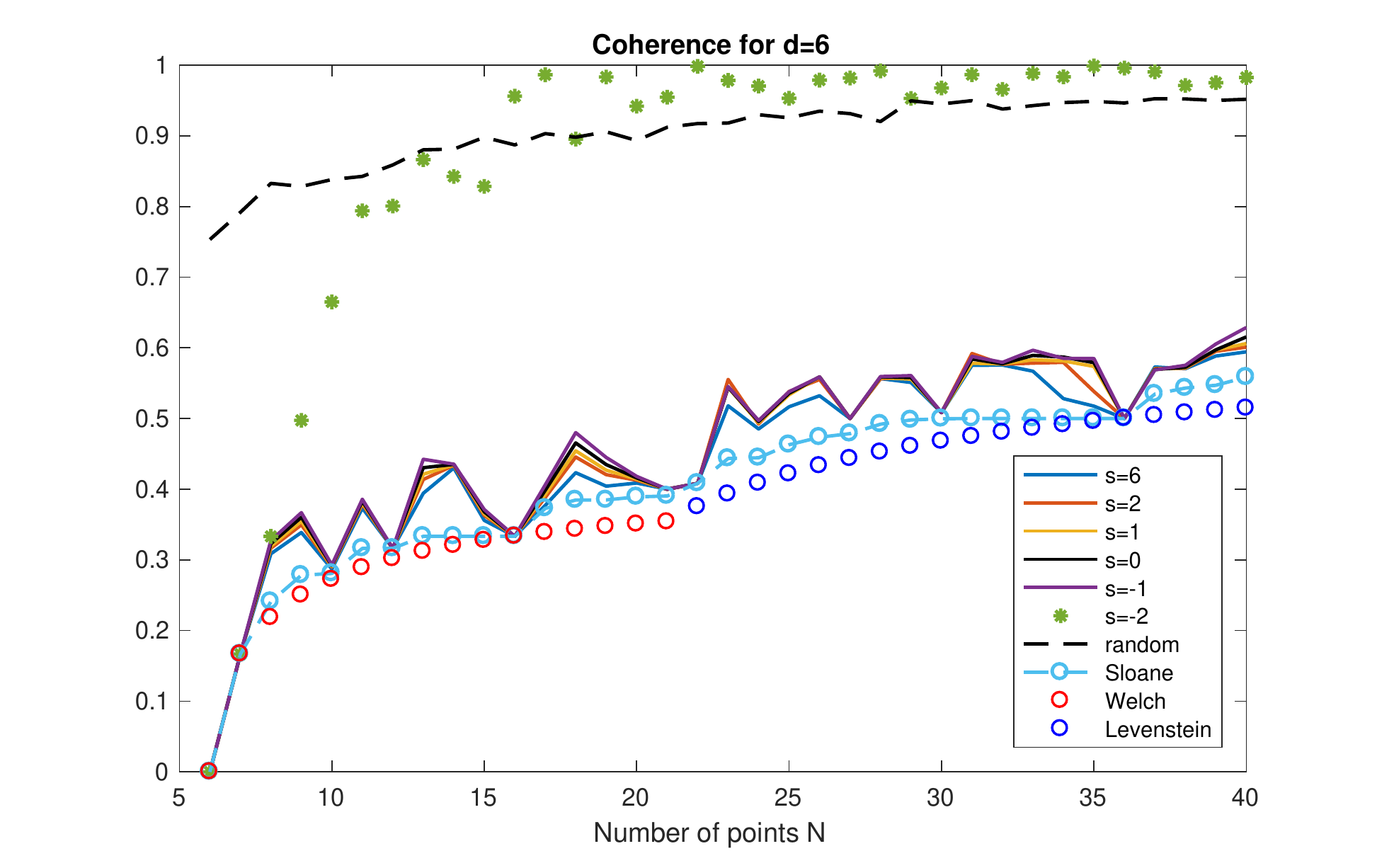}
\caption{}\label{fig:640}
\end{figure}

\begin{figure}[htb]
\includegraphics[width=1.\textwidth]{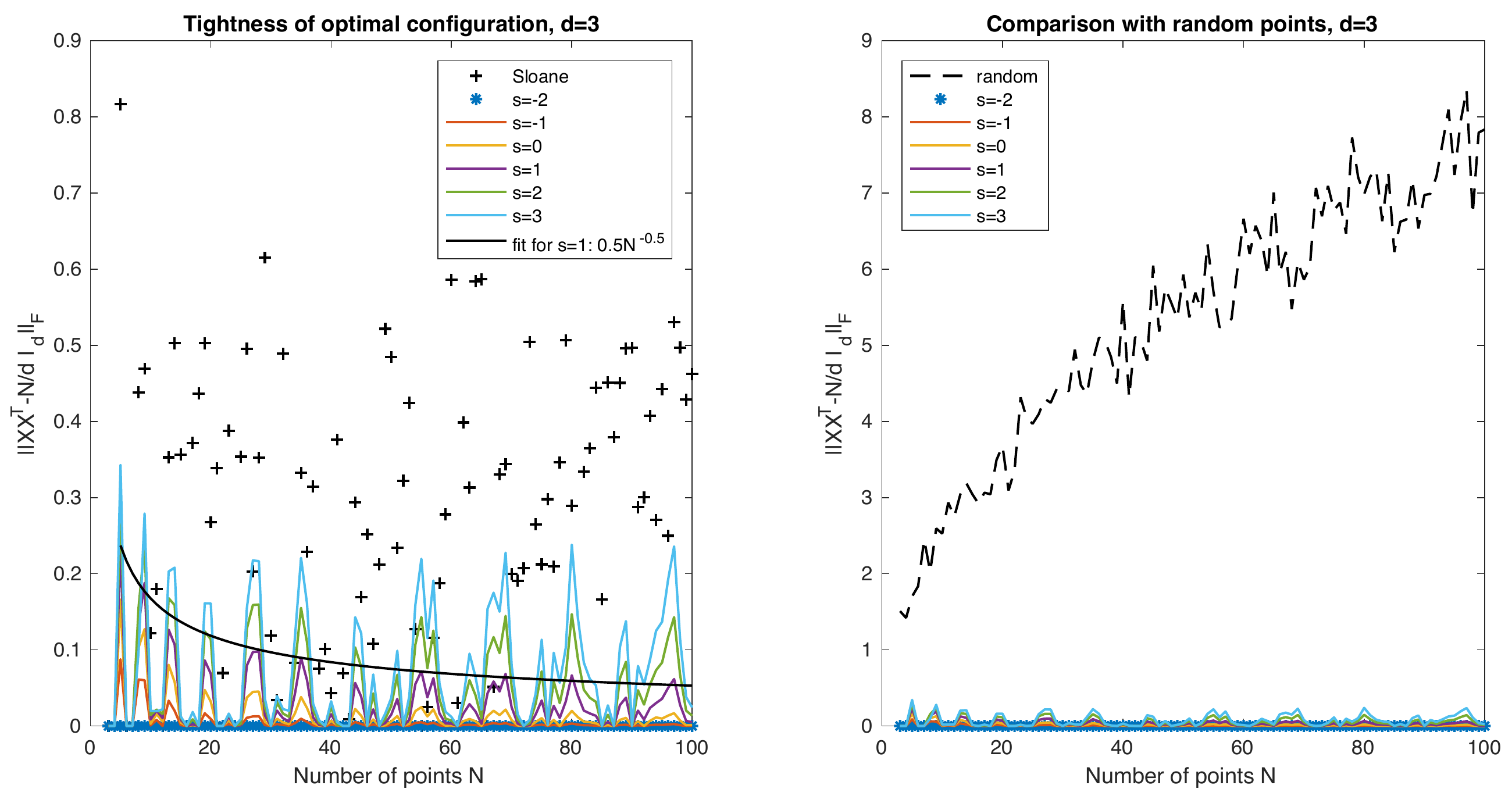}
\caption{Tightness for $d=3$. The right side plot includes random configurations.}\label{fig:tight}
\end{figure}

\begin{figure}[htb]
\includegraphics[width=0.8\textwidth]{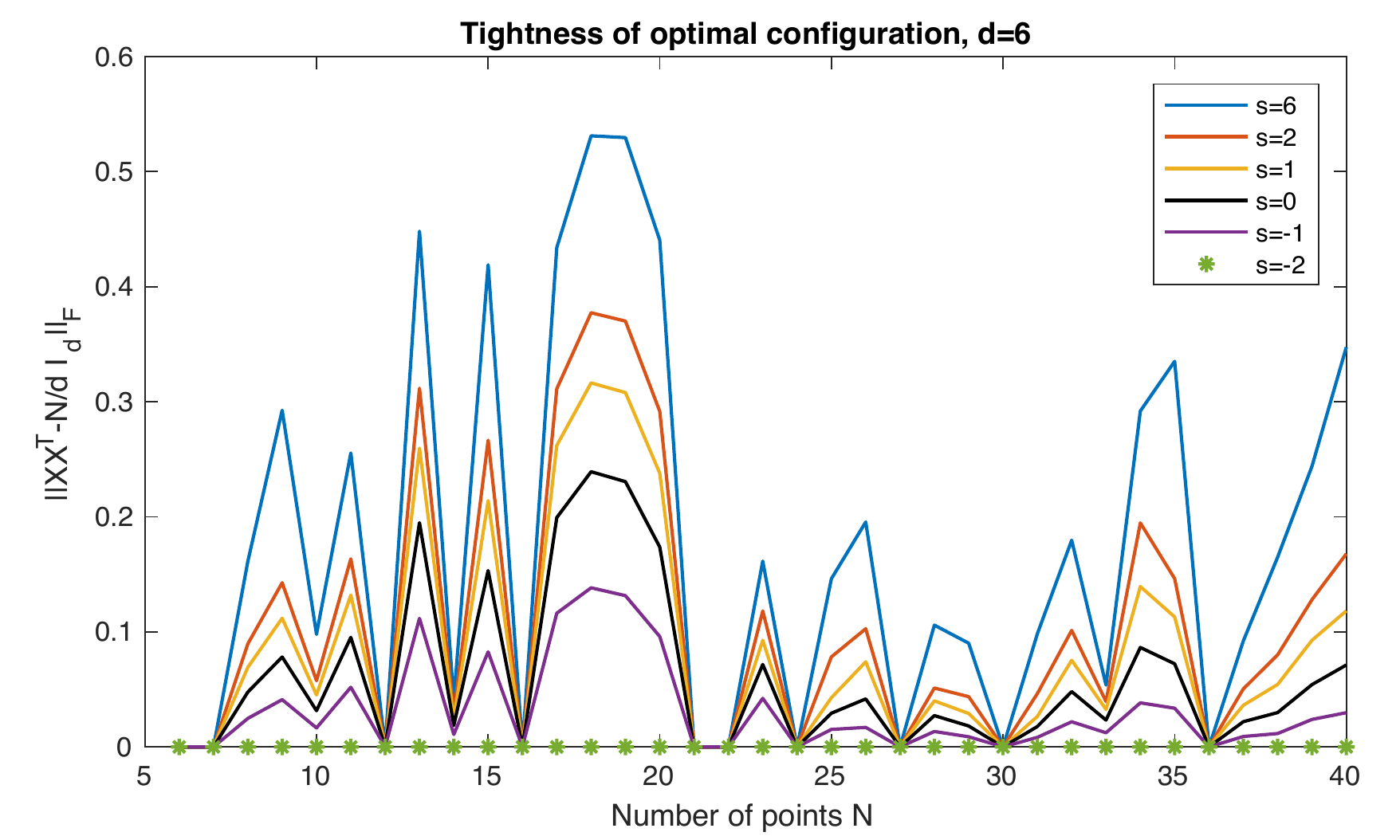}
\caption{Tightness for $d=6$. }\label{fig:tight6}
\end{figure}

\subsection{Nearly tight}\label{sec:num:tight}
We addressed the tightness of the optimal configurations   by computing $\|XX^T-\frac{N}{d}I_d\|_F$. We reuse the points generated from the first experiment with $d=3$ and $N$ ranging from 3 to 100.
 Figure \ref{fig:tight} (left) shows the results for $s=-2, -1,0, 1, 2, 3$. The $s=-2$ (frame potential) case recovers tight frames since by   \eqref{equ:fpf}, Riesz $(-2)$-energy is equal to  $\|XX^T-\frac{N}{d}I_d\|_F^2$ plus a constant. Unfortunately, the separation property deteriorates as  $s$ decreases while the  tightness property improves. This is further validated by the poor  tightness of the Sloane points as they correspond to the $s=\infty$ case.
 A least squares curve fitting was also performed for the peaks (least tight) for $s=1$, which exhibits an $N^{-1/2}$ decay, a better rate than what Theorem \ref{thm:tight} guarantees.
Notice that the right side  of Figure \ref{fig:tight} also includes    uniform random vectors for comparison.  The randomly generated configurations   exhibit worse behavior for both coherence and tightness.  This has  also been observed in \cite{BSSW}.

The tightness for $d=6$ with $N$ ranging from 6 to 40 is illustrated in Figure \ref{fig:tight6} where the points generated from the second experiment are reused.  The figure displays a clear pattern of improved tightness as $s$ decreases.


\subsection{Achieving good separation and exact tightness}\label{sec:num:both}

In this section we present experiments based on a simple algorithm for obtaining frames with good separation and exact tightness.   For the frame potential minimization problem ($s=-2$ of \eqref{equ:projective:negative}), recall that every local minimizer is a global minimizer; i.e.,  a  tight frame~\cite{BF03}.
The output of \verb"fminunc" is certainly affected by the initial input. As seen in Figure \ref{fig:sep} and Figure \ref{fig:640}, the tight frames found by minimizing the frame potential using random initializations generically have poor separation. We propose the following approach for generating well-separated tight frames in 
$\mathcal{S}(d,N)$ for given $d$ and $N$. 

\begin{enumerate} 
\item Generate a random frame $X\in \mathcal{S}(d,N)$. 
\item   Using $X$ as an initial configuration, use an optimization algorithm (such as gradient descent) to find a local minimizer $Y$ for \eqref{equ:projective} for some $s>d-1$.  Motivated by Theorems~\ref{thm:sepp} and \ref{thm:tight} the minimizer $Y$  is expected to be well-separated  and nearly tight. 
\item Minimize \eqref{equ:projective:negative} with $s=-2$ using $Y$ as the initial configuration.  The experiments presented  below suggest that the resulting frame is well-separated and tight.\\
\end{enumerate}

Variations on this approach such as iterating steps 2 and 3 or minimizing Riesz-$s$ energy  restricted to the manifold of tight frames will be explored in future work.

\begin{figure}
\centering
\includegraphics[width=1\textwidth]{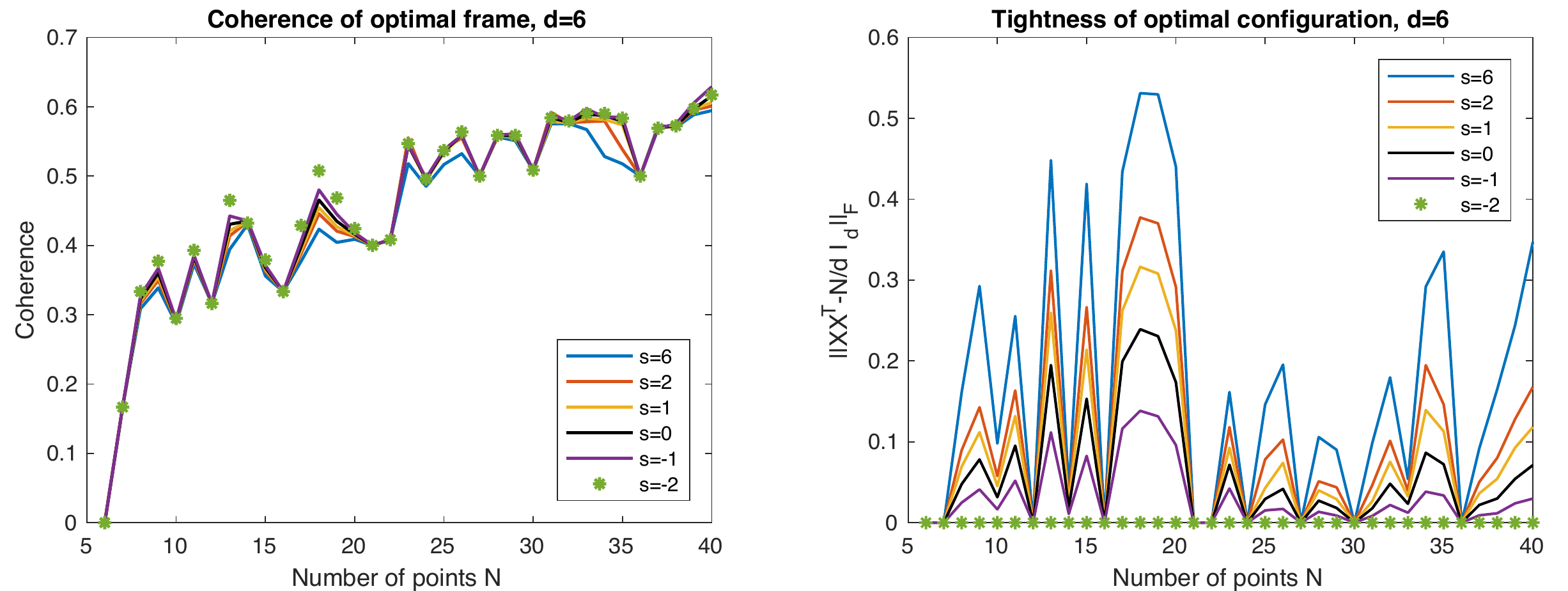}
\caption{The optimal configuration for $s=-2$ with for various choice of   $s$ used in step 2 for initialization.}\label{fig:sn2better}
\end{figure}

 Figure \ref{fig:sn2better} shows the performance of the optimal configuration of minimizing frame potential with the initialization being the optimal configuration obtained through solving \eqref{equ:projective}. The numerics indicate that these optimal tight configurations are indeed well-separated.
  The left graph of Figure \ref{fig:sn2better} should be compared to Figure \ref{fig:640} (the values of $s$ used in step 2 of the above algorithm are indicated in the figure and include values of $s<d-1$). Numerically, this is a promising way to find well-separated tight frames, which has many applications including signal transmission \cite{GKK01, HP04}. 

\begin{figure}[bht]
\centering
\includegraphics[width=0.8\textwidth]{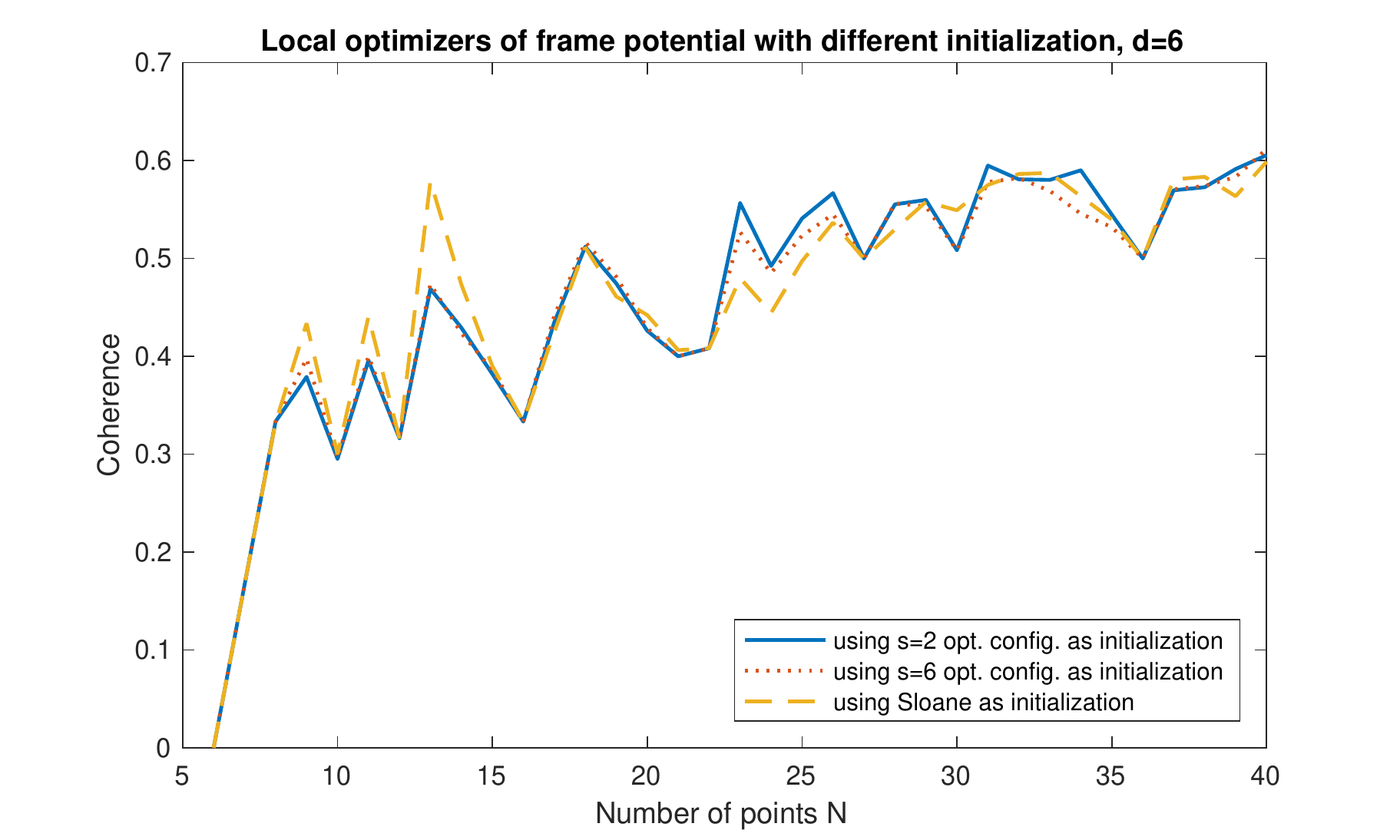}
\caption{Separation comparison of minimizing frame potential with different initializations}\label{fig:initialization}
\end{figure}

The set of finite unit norm frames is topologicaly connected, and an irreducible variety~\cite{CMS17}, but our experiment suggests that for a local minima of \eqref{equ:projective}, there will be a tight frame close to it. This can perhaps be explained by the recently solved Paulsen Problem~\cite{HM18}, which implies that for a nearly  tight unit norm frame $F$, there exists a unit norm tight frame nearby since we have shown that a local minima of \eqref{equ:projective} is nearly tight.
This suggests that step 3 will result in a nearby tight configuration if the configuration from step 2 is nearly tight as indicated in Theorem~\ref{thm:tight} for $N$ large. 


To further see how the initializations impact the frame potential problem, Figure \ref{fig:initialization} compares the coherence of the optimal configuration of solving \eqref{equ:projective:negative} ($s=-2$) with different initializations. The results are similar, but note that starting with Sloane points, the best separated points among the three, does not necessarily end up with the best separation.

\appendix
\section{}
\subsection{Uniform measure}
Given the hypersphere $\Sb^{d-1}$, let $C_r(x)$ be the hyperspherical cap centered at $x$, with $r$ being the Euclidean distance of the furthest point to $x$. That is,
$$C_r(x)=\{y\in\Sbd: \|x-y\|\leq r\}.$$
Recalling that $\sd$ denotes the normalized surface measure,  the following asymptotic formula holds:
\begin{equation}\label{equ:cap}
\sd(C_r(x))=\frac{1}{d-1}\gamma_d r^{d-1}+\mathcal{O}(r^{d+1}),\qquad (r\rightarrow0),
\end{equation}
and also the estimate
\begin{equation}\label{equ:capu}
\sd(C_r(x))\leq\frac{1}{d-1}\gamma_d r^{d-1},
\end{equation}
where \begin{equation}\label{equ:gammad}\gamma_d:=\frac{\Gamma(d/2)}{\Gamma((d-1)/2)\Gamma(1/2)}.\end{equation} Both estimates can be found in Section 3 of \cite{KS98}.

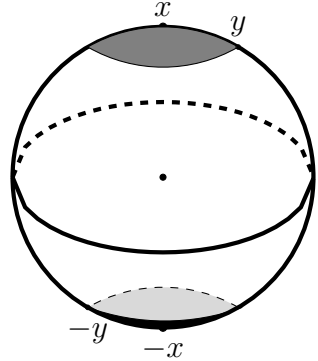
\begin{figure}[thb]
\begin{tikzpicture}
\fill[gray!30] (-1,-1.732) arc(240:300:2) (1,-1.732,0) arc(60:120:2);
\draw[ultra thick] (0,0) circle(2);
\draw[ultra thick, domain=-2:2] plot (\x, {-sqrt(1-\x*\x/4)});
\draw[ultra thick, dashed, domain=-2:2] plot (\x, {sqrt(1-\x*\x/4)});
\filldraw (0,0) circle(0.04);
\filldraw (0,2) circle(0.05) node[above] {$x$};
\filldraw (0,-2) circle(0.05) node[below] {$-x$};
\draw[fill=gray] (-1,1.732) arc(240:300:2) (1,1.732,0) arc(60:120:2);
\draw[dashed] (1,-1.732) arc(60:120:2);
\draw[fill=black] (-1,-1.732) arc(250:290:3) (1,-1.732,0) arc(-60:-120:2);
\draw (1,1.732) circle(0.03) node[above] {$y$};
\draw (-1,-1.732) circle(0.03) node[below] {$-y$};

\end{tikzpicture}
\caption{$F^{-1}(B(p_x,r))$}\label{fig:cap}
\end{figure}

\begin{lemma}\label{lem:upper}
When $\Hb=\R$, the uniform measure on $\Dc$  is  $(d-1)$-regular.
Moreover, we have the estimate  
\begin{equation}\label{equ:mupAp}
\Phi(\sd)(B(p_x,r))\leq \frac{2}{d-1}\gamma_dr^{d-1},\quad\text{ for any }p_x\in\Dc, 0<r\leq diam(\Dc)
\end{equation}
\end{lemma}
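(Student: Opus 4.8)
The plan is to reduce the $(d-1)$-regularity statement to the known cap estimate \eqref{equ:capu} by identifying the preimage under $\Phi$ of a small ball in $\Dc$. Since $\Phi(\sd)$ is a pushforward measure, by definition \eqref{equ:mup} we have
\begin{equation*}
\Phi(\sd)(B(p_x,r)) = \sd\left(\Phi^{-1}(B(p_x,r))\right),
\end{equation*}
so the entire task is to describe the set $\Phi^{-1}(B(p_x,r))\subset\Sb^{d-1}$ and bound its surface measure. First I would unwind the isometry \eqref{equ:essential}: a point $y\in\Sb^{d-1}$ satisfies $p_y\in B(p_x,r)$ exactly when $\|p_x-p_y\|_F<r$, i.e. $2-2|\langle x,y\rangle|^2<r^2$, which is the condition $|\langle x,y\rangle|^2 > 1-r^2/2$.

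The key geometric observation — illustrated in Figure~\ref{fig:cap} — is that because $\Phi$ collapses antipodal points, the preimage $\Phi^{-1}(B(p_x,r))$ is the union of \emph{two} antipodal spherical caps, one centered at $x$ and one centered at $-x$. Concretely, the condition $|\langle x,y\rangle|^2>1-r^2/2$ means $y$ is close (in the chordal/line sense) to the line $\ell(x)$, which on the sphere means $y$ lies near $x$ \emph{or} near $-x$. I would translate the inner-product threshold into a Euclidean-distance threshold on $\Sb^{d-1}$: using $\|x-y\|^2=2-2\langle x,y\rangle$, the cap around $x$ on which $\langle x,y\rangle$ is sufficiently close to $1$ has radius controlled by $r$. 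A short computation shows that $\Phi^{-1}(B(p_x,r))\subset C_{r'}(x)\cup C_{r'}(-x)$ for an appropriate radius $r'$ comparable to $r$ (to leading order one can take $r'$ with $(r')^2/2 \le r^2/2$ after accounting for the square, giving $r'\le r$). The antipodal symmetry of $\sd$ gives $\sd(C_{r'}(x))=\sd(C_{r'}(-x))$, so
\begin{equation*}
\Phi(\sd)(B(p_x,r)) \le 2\,\sd(C_{r'}(x)) \le 2\cdot\frac{1}{d-1}\gamma_d (r')^{d-1}\le \frac{2}{d-1}\gamma_d\, r^{d-1},
\end{equation*}
where the middle inequality is exactly \eqref{equ:capu}.

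The main obstacle I anticipate is getting the radius bookkeeping exactly right so that the caps' radius $r'$ can be bounded by $r$ itself (rather than some constant multiple), which is what yields the clean constant $\frac{2}{d-1}\gamma_d$ in \eqref{equ:mupAp} with no stray factors. This requires carefully converting between the Frobenius distance $\|p_x-p_y\|_F$ on $\Dc$ and the Euclidean distance $\|x-y\|$ on the sphere: the relation $|\langle x,y\rangle|^2>1-r^2/2$ must be compared against $\|x-y\|^2=2-2\langle x,y\rangle$, and since $|\langle x,y\rangle|\le 1$ one has $1-|\langle x,y\rangle|^2=(1-|\langle x,y\rangle|)(1+|\langle x,y\rangle|)\ge 1-|\langle x,y\rangle|$, which lets the chordal ball be absorbed into a genuine spherical cap of radius at most $r$. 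Once this inclusion $\Phi^{-1}(B(p_x,r))\subset C_r(x)\cup C_r(-x)$ is justified, the remainder is immediate from \eqref{equ:capu} and antipodal symmetry, and the $(d-1)$-regularity of $\Phi(\sd)$ follows. I would close by remarking that the lower-regularity half (the constant $c_{\Dc}$) follows symmetrically from the leading-order asymptotic \eqref{equ:cap}, which is what is needed for the packing bound \eqref{equ:optimal}.
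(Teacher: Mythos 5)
Your proposal is correct and follows essentially the same route as the paper: identify $\Phi^{-1}(B(p_x,r))$ as (a subset of) two antipodal spherical caps, bound the cap radius by $r$ via the elementary factorization $1-|\langle x,y\rangle|^2=(1-|\langle x,y\rangle|)(1+|\langle x,y\rangle|)\ge 1-|\langle x,y\rangle|$ (the paper's equivalent step is $\sqrt{1-r^2/2}\ge 1-r^2/2$), and then apply the cap estimate \eqref{equ:capu} together with antipodal symmetry to get the factor $2$. The radius bookkeeping you flag as the main obstacle is handled correctly, so the clean constant $\frac{2}{d-1}\gamma_d$ comes out exactly as in \eqref{equ:mupAp}.
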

\begin{proof} 
$\Dc$ is the projective space embeded in $\R\Mb_{d\times d}^h$. $p_x$ and $p_y$ are furthest away if $x\perp y$, so $\text{diam}(\Dc)=\sqrt{2}$.

For any point $p_x=F(x)\in \Dc$ and any $r\leq\text{diam}(\Dc)=\sqrt{2}$, suppose in the set $B(p_x,r)\cap \Dc$, $p_y$ is the point that is furthest away from $p_x$. We can pick $y$ so that $\langle x,y\rangle\geq0$. Then $\Phi^{-1}(B(p_x,r))$ is the union of the spherical cap centered at $x$ with boundary point  $y$ together with its antipodal image, see Figure \ref{fig:cap}. 
By \eqref{equ:essential},
$$|\langle x,y\rangle|^2=1-\|p_x-p_y\|^2/2\geq1-r^2/2,$$
$$||x-y||^2=2-2\langle x,y\rangle\leq 2-2\sqrt{1-r^2/2}\leq2-2(1-r^2/2)=r^2.$$
By \eqref{equ:capu},
\begin{equation*}
\Phi(\sd)(B(p_x,r))=\sd(\Phi^{-1}(B(x,r)))=2\sd(C_r(x))\leq \frac{2}{d-1}\gamma_dr^{d-1}.
\end{equation*}
\end{proof}

\subsection{Expected value of coherence}
Let $X=\{x_i\}\in\Sc(d,N)$ be a random configuration on the sphere where each point is selected from a uniform distribution on the sphere. Let $\Theta=\min_{i\neq j}\arccos\langle x_i,x_j\rangle$, so
\begin{equation}\label{equ:ab1}
\xi(X)=\max_{i\neq j}|\langle x_i,x_j\rangle|\geq\max_{i\neq j}\langle x_i,x_j\rangle=\cos\Theta\geq1-\Theta^2/2.
\end{equation}
It is proven in \cite[Theorem 2]{CFJ13} that $F_N(t):=\Pr(N^{2/(d-1)}\Theta\leq t)\rightarrow F(t)$ where $F(t)=1-\exp(-\frac{\gamma_d}{2(d-1)}t^{d-1})$ is supported on $(0,\infty)$.

In order to compute the expected value of $\Theta^2$, we define
$G_N(s):=\Pr(N^{4/(d-1)}\Theta^2\leq s)=F_N(\sqrt{s})\rightarrow F(\sqrt{s})$. By a similar argument as the one in \cite[Corollary 3.4]{BRSSWW18}, we get
\begin{align*}
\lim_{N\rightarrow\infty}\E(N^{4/(d-1)}\Theta^2)&=\lim_{N\rightarrow\infty}\int_0^{\infty}(1-G_N(s))ds\\
&=\int_0^{\infty}1-F(\sqrt{s})ds=\int_0^{\infty}\exp(-\frac{1}{2}\kappa_{d}s^{\frac{d-1}{2}}):=C_d
\end{align*}

By \eqref{equ:ab1}, we have 
$$\E(\xi(X))\geq\E(1-\Theta^2/2)\sim1-\frac{C_d}{2}N^{-\frac{4}{d-1}}.$$
\bibliographystyle{amsplain}

\end{document}